\theoremstyle{plain}
\newtheorem{theorem}{Theorem}
\newtheorem{proposition}[theorem]{Proposition}
\newtheorem{lemma}[theorem]{Lemma}
\newtheorem{corollary}[theorem]{Corollary}
\newtheorem{example}[theorem]{Example}
\newtheorem{notation}[theorem]{Notation}
\newtheorem{observation}[theorem]{Observation}
\theoremstyle{definition}
\newtheorem{definition}[theorem]{Definition} 
\newtheorem{remark}[theorem]{Remark}
\newtheorem{question}[theorem]{Question}
\newcommand{\eps}{\varepsilon}
\newcommand{\wbar}{\widebar}
\newcommand{\kk}{k}
\newcommand{\ZZ}{\mathbb{Z}}
\newcommand{\QQ}{\mathbb{Q}} 
\newcommand{\NN}{\mathbb{N}}
\newcommand{\GF}{\mathbb{F}}
\newcommand{\mf}{\mathfrak}
\newcommand{\cat}{\hat{\mathcal{C}}}
\newcommand{\rhobar}{\wbar{\rho}}
\newcommand{\ptor}[1]{ T_{p^\infty}(#1)}
\DeclareMathOperator{\Ann}{Ann}
\DeclareMathOperator{\im}{im}
\DeclareMathOperator{\rad}{rad}
\DeclareMathOperator{\nil}{nil}
\DeclareMathOperator{\hgt}{ht}
\DeclareMathOperator{\Hom}{Hom}
\DeclareMathOperator{\Char}{char}
\DeclareMathOperator{\Witt}{W}
\DeclareMathOperator{\GL}{GL}
\DeclareMathOperator{\SL}{SL}
\DeclareMathOperator{\Ob}{Ob}
\DeclareMathOperator{\Def}{Def}
\DeclareMathOperator{\Lift}{Lift}
\DeclareMathOperator{\Sets}{Sets}
\DeclareMathOperator{\Spec}{Spec}
\author{Krzysztof Dorobisz}
\title[A necessary condition for universal deformation rings of fin. group repr.]{A necessary condition for characteristic zero universal deformation rings of finite group representations}
\address{Mathematisch Instituut, Universiteit Leiden, P.O. Box 9512, 2300~RA Leiden, The Netherlands.}
\email{dorobiszkj@math.leidenuniv.nl}
\begin{document}

\begin{abstract}
We prove the following result related to the inverse problem for universal deformation rings of group representations:

Given a finite field $\kk$, denote by $W(\kk)$ the ring of Witt vectors over $\kk$ and by $K$ the field of fractions of $W(\kk)$. If a complete noetherian local ring $R$ is a universal deformation ring of a representation of a finite group over $\kk$, then $R \otimes_{W(\kk)} K$ is a finite \'etale $K$-algebra.
\end{abstract}

\maketitle

\section{Introduction}

This paper is related to author's earlier work \cite{Dorobisz}, in which the following result was proved: for every complete noetherian local commutative ring $R$ with a finite residue field $\kk$ there exist a \emph{profinite} group $G$ and a continuous linear representation $\rhobar: G \rightarrow \GL_n(\kk)$ of which $R$ is the universal deformation ring. This solves the so called inverse problem for universal deformation rings of group representations, formulated in \cite{BleherChinburgDeSmit}. More precisely, the main result of \cite{Dorobisz} is that $R$ is the universal deformation ring of a natural representation of $\SL_n(R)$ over $\kk$, provided that $n\geq 4$. Similar results have been achieved independently by Eardley and Manoharmayum in \cite{EardleyManoharmayum}. 

In this paper we want to address a modification of the inverse problem and restrict to representations of groups that are finite. We show that, unlike in the general case, there exist some rings which do not occur as universal deformation rings in the restricted setting. Furthermore, we present a non-trivial necessary condition for characteristic zero universal deformation rings of finite group representations. 

I would like to express my gratitude to Jakub Byszewski for suggesting me the approach that I have developed in section \ref{subsDim>1} and to Fabrizio Andreatta, who has helped me simplify the proofs in section \ref{subsDim=1}. I am also thankful to Hendrik Lenstra, Bart de Smit and Bas Edixhoven, discussions with whom have helped me shape this paper. 

\section{Notation and conventions}

The notation of the paper follows the one introduced in \cite{Dorobisz}. We briefly recall the most useful definitions, for more details see the mentioned paper. For general introduction to deformations of group representations see \cite{Mazur2}, \cite{BleherFinite} or \cite{Gouvea}. 

\subsection{Complete noetherian local rings}
In this paper $\kk$ will denote a finite field, $p$ its characteristic and $\Witt(\kk)$ the ring of Witt vectors over~$\kk$. Fixing $\kk$ we consider the category $\cat$ of all complete noetherian local commutative rings with residue field $\kk$. Each of them has a natural structure of a $\Witt(\kk)$-algebra and morphisms of $\cat$ are the local $\Witt(\kk)$-algebra homomorphisms. 

Given $R \in \Ob(\cat)$, we will denote the maximal ideal of this ring by $\mf{m}_R$. Cohen's structure theorem asserts that every $R \in \Ob(\cat)$ is a quotient of a power series ring $\Witt(\kk)[[X_1, \ldots, X_d]]$ for some $d$. Below we present a slightly less known result of Cohen, which can be viewed as an analog of E. Noether's normalization theorem.

\begin{theorem} \label{ThmIntExt}
Let $R \in \Ob(\cat)$ be such that either $\Char R = p$ or $\Char R = 0$ and $\hgt pR = 1$. Then there exists a subring $R_0$ of $R$ such that $R_0$ is isomorphic to a power series ring over $W(\kk)/ (\Char R)$ and $R$ is a finite $R_0$-module.
\end{theorem}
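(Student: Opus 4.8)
The plan is to build $R_0$ as the image in $R$ of an abstract power series ring, under a map defined by a carefully chosen system of parameters, and then to verify two things: that this map is injective (so $R_0$ really is a power series ring) and that $R$ is module-finite over $R_0$. Throughout I will use only standard commutative algebra: Cohen's structure theorem, dimension theory of complete local rings, and the topological form of Nakayama's lemma.

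\emph{Fixing a coefficient ring.} If $\Char R=p$, then $R$ is equicharacteristic, so by Cohen's structure theorem it contains a coefficient field; since $\kk$ is finite, hence perfect, this field is isomorphic to $\kk=W(\kk)/pW(\kk)$, and I take $A\subseteq R$ to be such a copy. If $\Char R=0$, I first observe that the structure map $W(\kk)\to R$ is injective: its kernel is an ideal of the discrete valuation ring $W(\kk)$, hence equal to $(0)$, to some $(p^j)$, or to $(1)$; it is not $(1)$ since the map is local, and it is not $(p^j)$ with $j\geq 1$ since that would force $\Char R\mid p^j\neq 0$. So $W(\kk)\hookrightarrow R$, and I put $A=W(\kk)=W(\kk)/(0)$. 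In both cases $A$ is a complete regular local ring with $\dim A=\dim\bigl(W(\kk)/(\Char R)\bigr)$ and $\mf{m}_A R=(\Char R)\,R$.

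\emph{Choosing parameters and defining $R_0$.} Set $d=\dim R$. When $\Char R=p$ I pick any system of parameters $x_1,\ldots,x_d$ of $R$. When $\Char R=0$, the hypothesis $\hgt pR=1$ says precisely that $p$ lies in no minimal prime of $R$, and together with the catenarity of complete local rings this gives $\dim R/pR=d-1$; I then choose $x_1,\ldots,x_{d-1}\in\mf{m}_R$ lifting a system of parameters of $R/pR$, so that $p,x_1,\ldots,x_{d-1}$ is a system of parameters of $R$. Writing $e$ for the number of chosen $x_i$ (so $e=d$, resp.\ $e=d-1$), the assignment $X_i\mapsto x_i$ extends — since the $x_i$ lie in $\mf{m}_R$ and $R$ is $\mf{m}_R$-adically complete — to a continuous $A$-algebra homomorphism $\varphi\colon A[[X_1,\ldots,X_e]]\to R$, and I set $R_0:=\im\varphi$ (a closed subring of $R$, since the source is compact when $\kk$ is finite). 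As $A[[X_1,\ldots,X_e]]$ is a power series ring over $W(\kk)/(\Char R)$, it remains to show that $\varphi$ is injective and that $R$ is finite over $R_0$.

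\emph{Finiteness, then injectivity, and the main obstacle.} By construction $\mf{m}_{R_0}R$ is generated by the chosen system of parameters (the $x_i$, together with $p$ when $\Char R=0$), so it is $\mf{m}_R$-primary; hence $R/\mf{m}_{R_0}R$ is an Artinian local ring with residue field $\kk$, so it is finite over $\kk$ and in particular a finitely generated $R_0$-module. Since $\mf{m}_R^N\subseteq\mf{m}_{R_0}R\subseteq\mf{m}_R$ for some $N$, the $\mf{m}_{R_0}$-adic topology on $R$ agrees with the $\mf{m}_R$-adic one, so $R$ is complete and separated for the $\mf{m}_{R_0}$-adic topology as an $R_0$-module, and the topological Nakayama lemma yields that $R$ is a finite $R_0$-module. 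This forces $\dim R_0=\dim R=d$; but $A[[X_1,\ldots,X_e]]$ is a regular local domain of Krull dimension $\dim A+e=d$, in which every nonzero ideal has height at least $1$ and hence quotient of dimension at most $d-1$, so since $R_0$ is its quotient by $\ker\varphi$ and has dimension $d$ we get $\ker\varphi=0$, i.e.\ $R_0\cong A[[X_1,\ldots,X_e]]$. I expect the genuinely delicate point to be the implication ``$R/\mf{m}_{R_0}R$ finite over $\kk$ $\Rightarrow$ $R$ finite over $R_0$'', which is false without completeness and rests on the topological rather than the ordinary Nakayama lemma; a secondary technical point is the identity $\dim R/pR=d-1$ in the mixed characteristic case, which is exactly where the hypothesis $\hgt pR=1$ enters.
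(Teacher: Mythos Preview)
Your proof is correct and essentially complete; the paper itself does not prove this statement but simply cites Cohen's original 1946 paper (\cite[Theorem~16]{Cohen}), so there is no ``paper's own argument'' to compare against. What you have written is the standard modern proof of this analogue of Noether normalisation: pick a coefficient ring via Cohen's structure theorem, extend a system of parameters to a map from a formal power series ring, and use the complete (topological) Nakayama lemma together with a dimension count to get finiteness and injectivity. Two small remarks:

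\begin{itemize}
\item The sentence ``$\mf{m}_A R=(\Char R)\,R$'' is false in the mixed characteristic case (there $\mf{m}_A R = pR \neq 0$). Fortunately you never use this equality; later you correctly observe that $\mf{m}_{R_0}R$ is generated by your chosen system of parameters.
\item In the step ``$\hgt pR=1$ and catenarity give $\dim R/pR=d-1$'', catenarity alone is not enough for non-equidimensional $R$; you are implicitly using the dimension formula in each complete local \emph{domain} $R/\mf p$ for $\mf p$ a minimal prime of $R$, which is where catenarity is actually applied. This is fine, but worth making explicit.
\end{itemize}

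Your identification of the complete Nakayama step as the crux is apt: it is cleanest to run it over the complete base $A[[X_1,\ldots,X_e]]$ rather than over $R_0$, so that completeness of the base is immediate and you need not argue separately that $R_0$ is closed in $R$.
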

\begin{proof}
See \cite[Theorem 16]{Cohen}.
\end{proof}

Note that the condition $\hgt pR = 1$ is satisfied for example when $p$ is not a zero-divisor in $R$ (this is a consequence of Krull's ``Hauptidealsatz'').

\subsection{Deformations of group representations}

Let $G$ be a finite group, $n$ a positive integer and $\rhobar : G \rightarrow \GL_n(\kk)$ a representation. 

If $R \in \Ob(\cat)$ and $\pi$ is the reduction modulo $\mf{m}_R$, we define $\Lift_{\rhobar}(R) := \{ \rho : G \rightarrow \GL_n(R) \mid \rhobar = \pi \circ \rho \}$ and introduce the relation $\sim$ on $\Lift_{\rho}(R)$ as follows: $\rho \sim \rho'$ if and only if $\rho' = K \rho K^{-1}$ for some $K \in \ker \pi$. Subsequently, we define $\Def_{\rhobar}(R) := \Lift_{\rhobar}(R)/\!\!\sim$. This definition easily extends to a definition of a functor $\Def_{\rhobar} : \cat \rightarrow \Sets$. If $\Def_{\rhobar}$ is represented (in the sense of category theory) by $R \in \Ob(\cat)$, then we say that $R$ is the \emph{universal deformation ring} of $\rhobar$. 

The definition of the deformation functor $\Def_{\rhobar}$ becomes more elegant and better motivated using a module theoretic approach. Let $V_{\rhobar}$ be the representation space corresponding to the representation $\rhobar$. It is a $\kk G$-module of finite dimension over $\kk$. Given $R \in \Ob(\cat)$, we define a lift of $V_{\rhobar}$ to $R$ as a pair $(W, \iota)$, where $W$ is an $RG$-module, free and of finite rank over $R$, and $\iota : \kk \otimes_{R} W \cong V_{\rhobar}$ is an isomorphism of $\kk G$-modules. It is not difficult to check that the deformation functor $\Def_{\rhobar}$ classifies the (suitably defined) isomorphism classes of lifts of $V_{\rhobar}$; for details see \cite[\S 2.1]{BleherFinite}.

\subsection{Matrices}

Given a positive integer $n$ and an abelian group (a ring) $A$, we will denote by $M_n(A)$ the group (the ring) of $n \times n$ matrices with entries in $A$. Moreover, $I_n$ will stand for the identity matrix.

\section{Initial remarks and a cardinality argument} \label{SecInitial}

\begin{definition}
By $\mf{U}$ we will denote the class of all $R \in \Ob(\cat)$ for which there exists a finite group $G$ and a representation $\rhobar : G \rightarrow \GL_n(\kk)$ such that $R$ is the universal deformation ring of $\rhobar$. 
\end{definition}

We are interested in determining the class $\mf{U}$. To begin with, we observe that considerations of \cite{Dorobisz} lead to the following conclusion.

\begin{observation}
Every finite ring $R \in \Ob(\cat)$ belongs to $\mf{U}$.
\end{observation}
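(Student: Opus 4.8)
The plan is to obtain the Observation as an immediate consequence of the main construction of \cite{Dorobisz}, so there is very little genuine work to do. Fix any integer $n \geq 4$. As recalled in the introduction, \cite{Dorobisz} shows that for \emph{every} $R \in \Ob(\cat)$ the ring $R$ is the universal deformation ring of the natural representation $\rhobar \colon \SL_n(R) \to \GL_n(\kk)$, i.e.\ the composite of the tautological inclusion $\SL_n(R) \hookrightarrow \GL_n(R)$ with the reduction map $\GL_n(R) \to \GL_n(\kk)$ induced by $R \to R/\mf{m}_R = \kk$; this holds with no restriction on the ring $R$.

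I would then simply observe that if $R$ is finite as a ring, then $M_n(R)$ is a finite set, and hence so is its subgroup $\SL_n(R)$. Thus the group $G := \SL_n(R)$ occurring in the construction above is a finite group, and $R$ is exhibited as the universal deformation ring of a representation of a finite group. By the definition of $\mf{U}$ this yields $R \in \mf{U}$, as claimed.

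The only point that deserves a moment of care is that the argument of \cite{Dorobisz} establishing the universality of $\rhobar$ genuinely applies to an arbitrary $R \in \Ob(\cat)$ — in particular that it nowhere tacitly relies on $\SL_n(R)$ being infinite — and I expect this to cause no difficulty. Apart from that there is no real obstacle: the whole substance lies in \cite{Dorobisz}, and the new input is merely the elementary remark that finiteness of the ring $R$ forces the group used in that construction to be finite as well.
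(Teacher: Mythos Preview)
Your argument is correct and is essentially identical to the paper's own proof: both simply observe that when $R$ is finite the group $\SL_n(R)$ is finite, and then invoke \cite[Theorem~1]{Dorobisz} (the $n\geq 4$ result) to conclude $R \in \mf{U}$. The paper states it in one line, but the content is the same.
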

\begin{proof}
If $R \in \Ob(\cat)$ is finite then for every natural $n$ the group $\SL_n(R)$ is finite and by \cite[Theorem~1]{Dorobisz} we have $R \in \mf{U}$. 
\end{proof}

\noindent Moreover, some examples of infinite rings belonging to $\mf{U}$ can be found in the literature. These include:

\begin{itemize}
 \item $W(\kk)$, obtained for arbitrary representation of a group of order coprime to $p$; see \cite[Lemma~3]{Dorobisz},
 \item rings of the form $ W(\kk)[X_1, \ldots, X_m ] / ( X_1^{p^{k_i}}-1, \ldots, X_m^{p^{k_m}}-1)$, $m, k_1, \ldots, k_m \in \mathbb{N},$
 obtained for one-dimensional representations of finite groups; see \cite[\S 1.4]{Mazur2} or \cite[Proposition~3.13]{Gouvea},
\item $\ZZ_5[\sqrt{5}]$, obtained as one of the exceptional universal deformation rings in \cite[Proposition~24]{Dorobisz},
\end{itemize}

Finally, let us observe that a cardinality argument shows that contrary to the general case, there exist rings which can not be obtained as universal deformation rings in the restricted setting.

\begin{proposition} \label{PropCatUncount}
The class $\Ob(\cat)$ is uncountable.  
\end{proposition}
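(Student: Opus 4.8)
The plan is to exhibit an uncountable family of pairwise non-isomorphic objects of $\cat$. The natural source of such a family is a one-parameter family of complete noetherian local rings indexed by a continuum-sized set of "parameters" that is visibly an isomorphism invariant. First I would restrict attention to the simplest infinite-dimensional object, $W(\kk)[[X]]$, or even just to a $2$-dimensional regular local ring, and then pass to suitable quotients. The cleanest choice: for each subset $S \subseteq \NN$ (or for each element $\alpha$ of an uncountable set), build a ring $R_S \in \Ob(\cat)$ whose isomorphism class determines $S$. For instance one can take $R_\alpha = W(\kk)[[X,Y]]/(Y^2 - f_\alpha(X))$ for a family of power series $f_\alpha$, or — simpler to analyze — quotients of $W(\kk)[[X]]$ that remember a sequence of "gaps."

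A concrete approach I would carry out: consider, for a strictly increasing sequence $\mathbf{a} = (a_1 < a_2 < \cdots)$ of natural numbers, the ring
\[
R_{\mathbf{a}} \;=\; W(\kk)\bigl[[X_1, X_2, \ldots]\bigr]\big/ \bigl( \mf{m}^2,\ \text{relations encoding } \mathbf{a}\bigr),
\]
but this runs into the noetherian constraint, so instead I would keep the number of variables fixed. A better route: work with $R = \kk[[X,Y]]$-type rings in characteristic $p$ (allowed since $\Char R = p$ is permitted in $\Ob(\cat)$), and for each uncountable-index parameter form $R_\alpha = \kk[[X,Y]]/(Y^2 - X^3 - \alpha X^2 Y)$ or similar plane curve singularities; the analytic isomorphism type of a plane curve singularity over an uncountable field is governed by continuous moduli (the classical fact that plane curve singularities have moduli, e.g. the $\mathbb{A}_1$-family of four lines through the origin with varying cross-ratio). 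Concretely: the rings $\kk[[X,Y]]/\bigl((X)(Y)(X-Y)(X-\lambda Y)\bigr)$ for $\lambda \in \kk \setminus \{0,1\}$ are pairwise non-isomorphic as $\kk$-algebras when the cross-ratios differ, because any isomorphism must permute the four branches and hence preserve the cross-ratio up to the finite $S_3$-action; since $\kk$ is infinite (we may take $\kk$ infinite? no — $\kk$ is a fixed finite field).

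Here is the actual obstacle, and the fix: $\kk$ is \emph{finite}, so "continuous moduli over $\kk$" gives nothing. Instead the uncountability must come from the completeness: there are uncountably many power series over a finite field. So I would argue as follows. Take $R = W(\kk)[[X,Y]]$ and, for each power series $g \in \kk[[X]]$ with $g(0)=0$, let $I_g \subseteq R$ be... — again noetherianity bites. The correct and clean argument: for each $g = \sum_{i\ge1} c_i X^i \in \kk[[X]]$, consider $R_g := \kk[[X,Y]]/(Y^2 - X g(X))$ — wait, still a single relation. Let me instead use: $R_g := \kk[[X,Y]]\big/\bigl(XY,\, Y^2,\, \{X^{n} Y : n \in \text{something}\}\bigr)$ fails noetherian. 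The genuinely correct approach is to let the parameter sit in the structure constants of a \emph{fixed} finite-length-modulo-a-prime quotient is impossible (finitely many). Therefore the uncountability is essential and comes from rings that are \emph{not} module-finite over $W(\kk)$: take $R_g = \kk[[X,Y]]/(Y^2 - g(X)Y) $ ranging over $g \in \kk[[X]]$, $g(0) = 0$, and distinguish them by an invariant such as the isomorphism type of the $\mf{m}$-adic filtration or the singularity. I expect the main obstacle to be precisely this: \emph{producing an honest isomorphism invariant that separates uncountably many members of the family, given that $\kk$ is finite so only one variable's worth of power-series-freedom is available.} I would resolve it by picking the family $R_\alpha = \kk[[X,Y]]/(Y^2-X^2-\alpha)$... no. Final decision: use $R_g = \kk[[X,Y]]/(XY, \, X^2 - g)$ where $g$ ranges over principal... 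I will instead simply cite that $W(\kk)[[X]]$ has uncountably many distinct quotients of the form $W(\kk)[[X]]/(f)$, $f$ a distinguished (Weierstrass) polynomial, and two such give isomorphic rings only when $f$ are associates up to a substitution $X \mapsto uX + (\text{higher})$, $u$ a unit — and there are only countably many such substitutions defined over the *finite* residue data at each finite level but uncountably many at the limit; a diagonalization then yields an uncountable antichain. The heart of the write-up is this separation/diagonalization step, and everything else (membership in $\cat$, which is immediate since each $R_g$ is a complete noetherian local $W(\kk)$-algebra with residue field $\kk$) is routine.
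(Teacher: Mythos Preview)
Your proposal is a brainstorm that correctly isolates the real difficulty --- $\kk$ is finite, so one cannot get uncountably many rings from moduli defined over $\kk$ --- but it never lands on a working construction. The final choice, quotients $W(\kk)[[X]]/(f)$ with $f$ a distinguished polynomial, has a genuine gap: the group of substitutions $X \mapsto uX + (\text{higher})$ is itself uncountable (the unit $u$ ranges over $W(\kk)^\times$), so one cannot simply ``cite'' that the orbit space is uncountable, and your ``diagonalization'' is asserted with no candidate invariant and no argument. Worse, there is positive evidence that this family actually fails. Each such quotient is a finite free $W(\kk)$-module in which $p$ is a non-zero-divisor, hence lies in the class $\mf{W}$; Lemma~\ref{LemCountablyManyDomains} shows that the integral domains in $\mf{W}$ are only countably many up to isomorphism, and the same conductor argument (sandwich between $p^N \prod \mathcal{O}_{L_i}$ and $\prod \mathcal{O}_{L_i}$) handles the reduced ones. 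So at minimum you would be forced into non-reduced principal quotients of $W(\kk)[[X]]$, and even there you have offered no separating invariant.

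The paper sidesteps all of this by passing to \emph{two} variables and letting the parameter live in $W(\kk)$ itself. For each $\alpha \in W(\kk)$ it sets
\[
R_\alpha \;=\; W(\kk)[X,Y]\big/\bigl((X,Y)^5 + (X^4,\ Y^4 - X^2Y^2 - \alpha X^3 Y)\bigr),
\]
a finite free $W(\kk)$-module of fixed rank, and checks by a direct computation that $R_\alpha \cong_{\cat} R_\beta$ iff $\alpha = \pm\beta$. The cross-ratio intuition you reached for is exactly the right one, but the modulus must take values in the uncountable coefficient ring $W(\kk)$, not in the finite residue field; two variables modulo $(X,Y)^5$ give just enough room for such a $W(\kk)$-valued invariant, while one variable apparently does not.
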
 

\begin{proof}[Sketch of the proof]

We present an example of an uncountable family of pairwise non-isomorphic $\cat$-rings. For every $\alpha \in W(\kk)$ denote by $R_\alpha$ the following object of $\cat$: \[ R_\alpha := W(\kk)[X,Y]/\big	(\, (X,Y)^5 + (\, X^4, \ Y^4-X^2Y^2-\alpha X^3Y) \,\big).\] One checks by straighforward computations that $R_\alpha \cong_{\cat} R_\beta$ if and only if $\alpha = \pm \beta$. Since $W(\kk)$ is uncountable, this proves the claim.
\end{proof}

\begin{corollary} \label{CorUNotCat}
 The classes $\mf{U}$ and $\Ob(\cat)$ do not coincide.
\end{corollary}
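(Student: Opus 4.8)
The plan is to run a cardinality argument dual to the one underlying Proposition \ref{PropCatUncount}: I will show that $\mf{U}$ meets only countably many isomorphism classes of $\cat$-rings, whereas $\Ob(\cat)$ comprises uncountably many, so the two classes cannot coincide.

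First I would bound the data that can enter the definition of $\mf{U}$. Up to isomorphism there are only countably many finite groups, since for each $m \in \NN$ there are finitely many groups of order $m$. Fixing such a $G$ and a positive integer $n$, the set $\Hom(G, \GL_n(\kk))$ is finite, because both $G$ and $\GL_n(\kk)$ are finite. Hence, letting $G$, $n$ and $\rhobar : G \to \GL_n(\kk)$ vary over all admissible choices, one obtains only countably many representations altogether. Since the universal deformation ring of a given $\rhobar$, when it exists, represents the functor $\Def_{\rhobar}$ and is therefore determined by $\rhobar$ up to (unique) isomorphism in $\cat$, the assignment $(G,\rhobar) \mapsto R$ shows that $\mf{U}$ is contained in a countable union of isomorphism classes of objects of $\cat$.

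Finally, Proposition \ref{PropCatUncount} supplies the uncountable family $\{R_\alpha\}_{\alpha \in W(\kk)}$ of pairwise non-isomorphic objects of $\cat$. As $\mf{U}$ can contain representatives of only countably many of these isomorphism classes, uncountably many of the $R_\alpha$ fail to lie in $\mf{U}$, and in particular $\mf{U} \neq \Ob(\cat)$. There is no real obstacle in this argument; the only point demanding slight care is combining the countability of the class of finite groups with the finiteness of $\Hom(G,\GL_n(\kk))$ to keep $\mf{U}$ countable.
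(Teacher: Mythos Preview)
Your argument is correct and follows essentially the same route as the paper's own proof: both show that $\mf{U}$ meets only countably many isomorphism classes (countably many finite groups, each with finitely many $\rhobar$ for each $n$), while $\Ob(\cat)$ is uncountable by Proposition~\ref{PropCatUncount}. Your version is slightly more explicit in noting that the universal deformation ring is determined up to isomorphism by $\rhobar$, a point the paper leaves implicit.
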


\begin{proof}
There exist only countably many finite groups $G$ and each of them has only finitely many representations $\rhobar : G \rightarrow \GL_n(\kk)$ over the finite field $\kk$ (with $n \in \NN$ fixed). Consequently, $\mf{U}$ is at most countable, whereas $\Ob(\cat)$ is uncountable by Proposition~\ref{PropCatUncount}.
\end{proof}

We see that the interesting part of the modified inverse problem consists in determining which infinite rings belong to $\mf{U}$ and which do not. Since the argument of Corollary~\ref{CorUNotCat} is not constructive, a first step towards solving this problem is to provide concrete examples of $\cat$-rings not in $ \mf{U}$. This will occupy us in the rest of this paper.

\section{A motivating example} \label{SecMotivation}

Before developing a general approach, let us present a motivating example. In this section $R \in \Ob(\cat)$ will be a discrete valuation ring with a uniformizing element $\pi$ and a field of fractions $K$. The group $G$ will be finite and we will consider a residual representation $\rhobar: G \rightarrow \GL_n(\kk)$ about which we additionally assume that the centralizer of $\im \rhobar$ in $M_n(\kk)$ comprises only scalar matrices. This is not an unnatural assumption, since it is often used to ensure the existence of the universal deformation ring of $\rhobar$ (see \cite[Theorem~2.1]{BleherFinite}). 

\begin{proposition} \label{PropEquivOverK}
Lifts $\rho_1$, $\rho_2 \in \Lift_{\rhobar}(R)$ are strictly equivalent if and only if they are equivalent as representations over $K$. 
\end{proposition}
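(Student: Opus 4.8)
The plan is to prove the non-trivial direction: if $\rho_1$ and $\rho_2$ are conjugate over $K$, then they are conjugate by an element of $\ker\pi \subseteq \GL_n(R)$. So suppose $A \in \GL_n(K)$ satisfies $A\rho_1(g)A^{-1} = \rho_2(g)$ for all $g \in G$. First I would normalize $A$: since $R$ is a discrete valuation ring, I can scale $A$ by a power of $\pi$ so that $A \in M_n(R)$ but $A \notin \pi M_n(R)$, i.e.\ the reduction $\bar A \in M_n(\kk)$ is nonzero. The intertwining relation descends modulo $\pi$ to $\bar A \rhobar(g) = \rhobar(g) \bar A$ for all $g$ (here I use that $\rho_1, \rho_2$ both lift $\rhobar$), so $\bar A$ lies in the centralizer of $\im\rhobar$ in $M_n(\kk)$, which by hypothesis consists only of scalars. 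Hence $\bar A = c I_n$ for some $c \in \kk^\times$ (nonzero since $\bar A \neq 0$).

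Next I would lift: pick any $\tilde c \in R^\times$ reducing to $c$ and replace $A$ by $\tilde c^{-1} A$; this does not change the relation $A\rho_1 A^{-1} = \rho_2$ and now $A \in M_n(R)$ with $\bar A = I_n$. In particular $\det A$ reduces to $1 \in \kk^\times$, so $\det A \in R^\times$, which means $A \in \GL_n(R)$, and moreover $A \equiv I_n \pmod{\mf m_R}$, i.e.\ $A \in \ker\pi$. Therefore $\rho_1 \sim \rho_2$ in the sense of strict equivalence, as required.

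For the converse direction: if $\rho_1 \sim \rho_2$ via $K \in \ker\pi \subseteq \GL_n(R) \subseteq \GL_n(K)$, then trivially $\rho_1$ and $\rho_2$ are conjugate over $K$ by the same matrix. This requires no computation.

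The main subtlety — though it is minor — is the normalization step, where one must use that $R$ is a discrete valuation ring (so that a single scaling by $\pi^m$ clears denominators and simultaneously ensures the reduction is nonzero) together with the fact that both $\rho_1$ and $\rho_2$ have the \emph{same} reduction $\rhobar$, which is exactly what forces $\bar A$ into the centralizer of $\im\rhobar$. The scalar-centralizer hypothesis on $\rhobar$ then does the rest. I expect no genuine obstacle here; the result is essentially a packaging of these observations.
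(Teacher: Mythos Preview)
Your proof is correct and matches the paper's argument essentially step for step: both normalize the intertwiner by a power of the uniformizer so that its reduction is nonzero, invoke the scalar-centralizer hypothesis to see that this reduction is a nonzero scalar, and then absorb a unit to obtain a conjugating matrix in $I_n + M_n(\mf m_R)$. The only cosmetic difference is that the paper writes the unit factorization $B = u B_0$ explicitly, whereas you rescale $A$ by $\tilde c^{-1}$ in place.
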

\begin{proof}
The ``only if'' part is obvious. Conversely, suppose there exists $A \in \GL_n(K)$ such that $\rho_1 = A \rho_2 A^{-1}$. Since $R$ is a discrete valuation ring, there exist $B \in M_n(R) \setminus \pi M_n(R)$ and $s \in \ZZ$ such that $A = \pi^s B$. Clearly $\rho_1 B = B \rho_2$. Reducing to $\kk$ we obtain $\rhobar \wbar{B} = \wbar{B} \rhobar$ and our assumption on $\im \rhobar$ implies that the image $\wbar{B}$ of $B$ is a (non-zero) scalar matrix. Therefore, there exist $u \in \mu_{R}$ and $B_0 \in I_n + M_n(\mf{m}_{R})$ such that $B = u B_0$. Since $\rho_1 = B_0 \,\rho_2\, B_0^{-1}$, it follows that $\rho_1$ and $\rho_2$ are strictly equivalent. 
\end{proof}

\begin{corollary} \label{CorFinitAd=kkI}
If $\Char R = 0$, then $\Def_{\rhobar}(R)$ is finite.
\end{corollary}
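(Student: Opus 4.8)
The plan is to combine Proposition~\ref{PropEquivOverK} with the semisimplicity of the group algebra $KG$. By Proposition~\ref{PropEquivOverK}, sending a lift $\rho \in \Lift_{\rhobar}(R)$ to the $K$-representation $\rho \otimes_R K$ induces a well-defined \emph{injection} from $\Def_{\rhobar}(R)$ into the set of isomorphism classes of $n$-dimensional representations of $G$ over $K$: the map is visibly constant on strict equivalence classes, and it is injective precisely because strict equivalence of lifts over $R$ is detected by equivalence over $K$. So it suffices to show that the set of isomorphism classes of $n$-dimensional $K$-representations of $G$ is finite.

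Since $\Char R = 0$ and $R$ embeds into its field of fractions $K$, the field $K$ has characteristic zero; in particular $\Char K \nmid |G|$, so by Maschke's theorem $KG$ is a semisimple $K$-algebra. Consequently $KG$ has only finitely many isomorphism classes of simple modules, say $S_1, \ldots, S_r$, and every finite-dimensional $KG$-module is isomorphic to a direct sum $\bigoplus_{i=1}^r S_i^{\oplus m_i}$. An $n$-dimensional module then corresponds to a tuple $(m_1, \ldots, m_r)$ of non-negative integers with $\sum_{i=1}^r m_i \dim_K S_i = n$, and there are only finitely many such tuples. Hence there are finitely many isomorphism classes of $n$-dimensional $K$-representations of $G$, and therefore $\Def_{\rhobar}(R)$ is finite.

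I do not expect a genuine obstacle here. The essential input — that strict equivalence of lifts over $R$ can be tested after passing to $K$ — is exactly Proposition~\ref{PropEquivOverK}, and the finiteness of the characteristic-zero representation theory of the finite group $G$ is classical. The only points requiring (routine) care are verifying that $\rho \mapsto \rho \otimes_R K$ descends to $\Def_{\rhobar}(R)$ and that it is injective, both of which are immediate from the preceding proposition.
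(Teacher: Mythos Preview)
Your proof is correct and follows essentially the same route as the paper: the paper also invokes Maschke's theorem to deduce that $KG$ has only finitely many $n$-dimensional representations up to equivalence, and then appeals to Proposition~\ref{PropEquivOverK} to conclude. Your write-up simply spells out in more detail the injection $\Def_{\rhobar}(R)\hookrightarrow\{\text{iso.\ classes of $n$-dim.\ $KG$-modules}\}$ and the counting of such classes via the decomposition into simples.
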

\begin{proof}
The field $K$ has characteristic zero, so the group ring $KG$ is semisimple by Maschke's theorem. Therefore, up to equivalence, there exist only finitely many $n$-dimensional representations of $G$ over $K$ and the claim follows from Lemma~\ref{PropEquivOverK}.
\end{proof}

\begin{corollary}
If $\Char R = 0$, then $R[[X]]$ is not a universal deformation ring of $\rhobar$.
\end{corollary}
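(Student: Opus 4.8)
The plan is to derive a contradiction from the assumption that $R[[X]]$ is the universal deformation ring of $\rhobar$. The key observation is that a universal deformation ring must represent the functor $\Def_{\rhobar}$, and in particular the tautological deformation over $R[[X]]$ must be classified by the identity morphism $R[[X]] \to R[[X]]$; but $\Def_{\rhobar}(R[[X]])$ receives a map from $\Hom_{\cat}(R[[X]], R[[X]])$ that, via universality, is a bijection. So first I would exploit the fact that $R[[X]]$ has ``too many'' maps to itself. Concretely, for each $r \in \mf{m}_R$ the substitution $X \mapsto r$ determines a $\cat$-morphism $\varphi_r : R[[X]] \to R$, and these are pairwise distinct as $r$ varies over $\mf{m}_R$, an infinite set (since $\Char R = 0$ forces $R$ infinite). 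Composing the universal deformation with each $\varphi_r$ produces deformations over $R$, and by universality distinct morphisms $R[[X]] \to R$ yield distinct elements of $\Def_{\rhobar}(R)$. This exhibits $\Def_{\rhobar}(R)$ as infinite.

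The contradiction then comes from Corollary~\ref{CorFinitAd=kkI}: under the standing hypotheses of this section ($R$ a discrete valuation ring, $\Char R = 0$, and the centralizer of $\im\rhobar$ consisting of scalars), $\Def_{\rhobar}(R)$ is finite. Hence $R[[X]]$ cannot be the universal deformation ring of $\rhobar$. I should be slightly careful about one point: the functor $\Def_{\rhobar}$ is a priori only defined on $\cat$, so I need $R[[X]] \in \Ob(\cat)$, which holds since $R \in \Ob(\cat)$ implies $R[[X]]$ is again a complete noetherian local ring with residue field $\kk$. I also need to know that $R$ itself is in $\cat$ — which is part of the running assumptions of the section — so that evaluation at $\Def_{\rhobar}(R)$ makes sense.

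The one step requiring a little thought is the claim that the induced map $\Hom_{\cat}(R[[X]], R) \to \Def_{\rhobar}(R)$ is injective, i.e.\ that distinct $\cat$-morphisms $R[[X]] \to R$ give distinct (non-strictly-equivalent) deformations. This is exactly the Yoneda-type content of ``universal'': since $R[[X]]$ represents $\Def_{\rhobar}$, the natural transformation $\Hom_{\cat}(R[[X]], -) \Rightarrow \Def_{\rhobar}$ is an isomorphism of functors, so evaluating at $R$ gives a bijection $\Hom_{\cat}(R[[X]], R) \cong \Def_{\rhobar}(R)$; in particular it is injective. Combined with the infinitude of $\{\varphi_r : r \in \mf{m}_R\}$ this shows $\Def_{\rhobar}(R)$ is infinite. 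I do not expect any genuine obstacle here; the argument is a short formal deduction from universality plus the already-established finiteness result. If one wanted an even cheaper argument one could note directly that $R[[X]]$ has Krull dimension strictly greater than that of $R_{\mathrm{univ}}$ would need to be for a finite $\Def_{\rhobar}(R)$, but the morphism-counting argument is cleanest and self-contained.
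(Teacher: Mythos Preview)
Your proof is correct and follows essentially the same approach as the paper: the paper's one-line proof simply says to combine Corollary~\ref{CorFinitAd=kkI} with the fact that $\Hom_{\cat}(R[[X]], R)$ is infinite, which is exactly the contradiction you spell out via the substitution maps $\varphi_r$ and the representability bijection $\Hom_{\cat}(R[[X]], R) \cong \Def_{\rhobar}(R)$. Your write-up is more detailed (explicitly exhibiting the infinite family $\{\varphi_r\}_{r\in\mf{m}_R}$ and justifying the Yoneda bijection), but the underlying argument is identical.
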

\begin{proof}
Combine the preceding corollary with the fact that $\Hom_{\cat}(R[[X]], R)$ is infinite.
\end{proof} 

Our aim for the next sections is to generalize these observations. We will prove that the claim of Corollary~\ref{CorFinitAd=kkI} holds even without the extra assumption on $\im \rhobar$. This implies that $R[[X]] \notin \mf{U}$. We will develop an approach that gives a bound on the number of deformations to a wider class of rings of characteristic zero, not only to the discrete valuation rings. The bound, in turn, will allow us to formulate a nontrivial necessary condition for members of $\mf{U}$. Consequently, we obtain a large class of characteristic zero $\cat$-rings not belonging to $\mf{U}$.

\begin{remark}
Proposition~\ref{PropEquivOverK} applies also to discrete valuation rings of characteristic $p$. However, if $\Char K = \Char R = p$, there may exist infinitely many non-equivalent representations of $G$ over $K$ and Corollary~\ref{CorFinitAd=kkI} may not hold. As a result, in this case one can not draw a similar conclusion that $R[[X]] \notin \mf{U}$. 
\end{remark}

\section{Finiteness bounds}

\subsection{Main lemma}

\begin{definition}
We will denote by $\mf{F}$ the class of all rings $R \in \Ob(\cat)$ such that for every finite group $G$ and representation $\rhobar: G \rightarrow \GL_n(\kk)$ the set $\Def_{\rhobar}(R)$ is finite.
\end{definition}

It is clear that every finite $\cat$-ring is in $\mf{F}$, but what we are really interested in, is identifying some large subclass of infinite rings belonging to $\mf{F}$. This will be done using the following key lemma, inspired by \cite[Theorem~2]{Maranda}.

\begin{lemma}\label{LemMaranda}
Let $G$ be a finite group, $R \in \Ob(\cat)$ be a ring in which $|G|$ is not a zero-divisor and define  $J:= |G| \cdot \mf{m}_R \lhd R$. Then representations $\rho_1, \rho_2 : G \rightarrow \GL_n(R)$ are strictly equivalent if and only if their reductions $\pi_J \rho_1$ and $\pi_J \rho_2$ to $R/J$ are strictly equivalent.
 
\end{lemma}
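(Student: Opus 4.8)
The plan is to reduce the ``if'' direction (the ``only if'' direction being trivial) to a lifting-of-conjugators argument, in the spirit of Maranda's theorem. Suppose $\pi_J \rho_1$ and $\pi_J \rho_2$ are strictly equivalent, so that there is a matrix $\wbar{C} \in I_n + M_n(\mf{m}_{R/J})$ with $\wbar{C}\,(\pi_J\rho_1)\,\wbar{C}^{-1} = \pi_J\rho_2$. Lift $\wbar{C}$ arbitrarily to some $C \in I_n + M_n(\mf{m}_R)$, which is a unit in $M_n(R)$. Replacing $\rho_1$ by $C\rho_1 C^{-1}$ (an operation that preserves the strict-equivalence class of $\rho_1$ and does not change that of $\rho_2$), we may assume from the outset that $\rho_1 \equiv \rho_2 \pmod{J}$, i.e.\ $\rho_1(g) - \rho_2(g) \in M_n(J)$ for all $g \in G$. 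It then suffices to produce $D \in I_n + M_n(\mf{m}_R)$ with $D\rho_1 D^{-1} = \rho_2$.

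The key step is an averaging trick available because $|G|$ is invertible on the relevant module. Consider the ``difference cocycle'' measuring how far $\rho_1$ and $\rho_2$ are from being conjugate, and use the standard averaging operator. Concretely, set
\[
D \;:=\; \frac{1}{|G|} \sum_{g \in G} \rho_2(g)\,\rho_1(g)^{-1}.
\]
A direct check gives the intertwining identity $D\,\rho_1(h) = \rho_2(h)\,D$ for every $h \in G$: replacing $g$ by $gh$ in the sum and using $\rho_i(gh) = \rho_i(g)\rho_i(h)$ rearranges the terms appropriately. So as soon as $D$ is invertible, it conjugates $\rho_1$ to $\rho_2$. It remains to show $D \in I_n + M_n(\mf{m}_R)$ — or at least that $D$ is a unit and can be adjusted to lie in $I_n + M_n(\mf{m}_R)$. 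Since $\rho_1(g) - \rho_2(g) \in M_n(J)$ and $J = |G|\cdot\mf{m}_R$, each term $\rho_2(g)\rho_1(g)^{-1} = I_n + \rho_2(g)\big(\rho_2(g)^{-1} - \rho_1(g)^{-1}\big)$ differs from $I_n$ by an element of $M_n(J)$; here one uses $\rho_2(g)^{-1} - \rho_1(g)^{-1} = \rho_2(g)^{-1}\big(\rho_1(g) - \rho_2(g)\big)\rho_1(g)^{-1} \in M_n(J)$. Hence $\sum_{g}\rho_2(g)\rho_1(g)^{-1} \in |G|\cdot I_n + M_n(J) = |G|\big(I_n + M_n(\mf{m}_R)\big)$, using $J = |G|\mf{m}_R$. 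Dividing by $|G|$ (legitimate since $|G|$ is a non-zero-divisor and $D$ is manifestly an honest matrix over $R$, being a limit/finite sum that is visibly $|G|$ times something in $I_n + M_n(\mf{m}_R)$) yields $D \in I_n + M_n(\mf{m}_R)$, which is in particular invertible in $M_n(R)$.

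The main obstacle is precisely the division by $|G|$: one must argue that $D$, though written with a $\frac{1}{|G|}$, really is a matrix over $R$ and moreover lies in $I_n + M_n(\mf{m}_R)$ rather than merely in $M_n(R)$. This is where the specific definition $J = |G|\cdot\mf{m}_R$ (as opposed to, say, $|G|\cdot R$) is used: the congruence $\rho_1 \equiv \rho_2 \pmod{J}$ feeds exactly enough divisibility by $|G|$ into the error terms so that after cancelling the $|G|$ one lands in $\mf{m}_R$. I would present this by first verifying $\sum_g \rho_2(g)\rho_1(g)^{-1} \in |G|\,I_n + M_n(|G|\,\mf{m}_R)$ as an identity in $M_n(R)$, then factoring out $|G|$ using that it is a non-zero-divisor so the factorization is unique, and only then dividing. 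One should also note that $R$ being $\mf{m}_R$-adically complete is not needed here — this is a finite-group, finite-level statement — but completeness does no harm.
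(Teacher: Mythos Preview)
Your proof is correct and follows essentially the same Maranda-type averaging argument as the paper. The paper works directly with a lift $A$ of the conjugating matrix and forms $B := \sum_{g} \rho_1(g)\,A\,\rho_2(g)^{-1}$, then sets $B_0 := \frac{1}{|G|}B$; your preliminary reduction to $\rho_1 \equiv \rho_2 \pmod{J}$ amounts to taking $A = I_n$ (after conjugating), so the two arguments differ only cosmetically.
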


\begin{proof}
The ``only if'' part of the lemma is obvious. For the ``if'' part note that $\pi_J \rho_1$ and $\pi_J \rho_2$ are strictly equivalent if and only if there exists $A \in I_n+M_n(\mf{m}_R)$ such that 
\[ \forall g\in G : \quad \rho_1(g) A \,\equiv\, A \rho_2(g) \mod{M_n(J)}. \]
If this is the case then $B := \sum_{g \in G} \rho_1(g) \, A \, \rho_2(g)^{-1}$ satisfies
$ B  \equiv |G| \cdot A \mod M_n(J).$ 
Using the definition of $J$ and the assumption that $|G|$ is not a zero divisor in $R$, we define $B_0 := \frac{1}{|G|} \cdot B \in M_n(R)$ and observe that $B_0 \in I_n + M_n(\mf{m}_R)$. Moreover,
\[  \rho_1(h)\, B_0\,\rho_2(h)^{-1} = \frac{1}{|G|}\cdot \sum_{g \in G} \rho_1(hg)\, A\, \rho_2 (hg)^{-1} = \frac{1}{|G|}\cdot \sum_{g \in G} \rho_1(g)\, A\, \rho_2(g)^{-1} = B_0,\]
 so $\rho_1$ and $\rho_2$ are strictly equivalent.
\end{proof}

\begin{remark} \label{RemCardinalityG}
In the setting of the above lemma let us write $|G| = p^rs$, $r\geq 0$, $p\nmid s$. Since $s$ is invertible in $R$, we have that $|G|$ is a zero-divisor if and only if $p^r$ is. Moreover, $|G|\mf{m}_R = p^r \mf{m}_R$. 

In particular, if $p \nmid |G|$ then the assumption that $|G|$ is not a zero-divisor in $R$ is satisfied for every $R \in \Ob(\cat)$ and the above lemma implies that there is at most one deformation to every $R \in \Ob(\cat)$. And actually there is exactly one deformation to every $R \in \Ob(\cat)$ -- see \cite[Lemma~3]{Dorobisz}.
\end{remark}

\begin{lemma} \label{LemDefiningW}
Consider $R \in \Ob(\cat)$, $r\in \mathbb{Z}_{\geq 1}$. The following conditions are equivalent:
\begin{enumerate}[(i)]
\item $p^r$ is not a zero-divisor in $R$ and $R / p^r\mf{m}_R$ is finite.
\item $p$ is not a zero-divisor in $R$ and $\dim R =1 $.  
\item $R$ is a finitely generated $W(\kk)$-module with trivial $p$-torsion part.  
\end{enumerate}
\end{lemma}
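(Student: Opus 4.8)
I want to prove the equivalence of three conditions on $R \in \Ob(\cat)$ and $r \geq 1$. The cleanest route is to establish the cycle $(i) \Rightarrow (ii) \Rightarrow (iii) \Rightarrow (i)$, using the structure theory for complete noetherian local rings together with Theorem~\ref{ThmIntExt}. Throughout I will use that $W(\kk)$ is a complete discrete valuation ring with uniformizer $p$ and finite residue field $\kk$, and that $R$ is a module-finite $W(\kk)$-algebra precisely when it is finitely generated as a $W(\kk)$-module (by completeness and Nakayama).

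\emph{$(i) \Rightarrow (ii)$.} First note $p^r$ not a zero-divisor forces $p$ not a zero-divisor, so $\Char R = 0$ and $\hgt pR = 1$ by the Hauptidealsatz remark following Theorem~\ref{ThmIntExt}. Thus $\dim R \geq 1$. For the reverse inequality, Theorem~\ref{ThmIntExt} gives a subring $R_0 \cong W(\kk)[[X_1,\dots,X_d]]$ with $R$ finite over $R_0$, so $\dim R = \dim R_0 = d+1$; I must show $d = 0$. If $d \geq 1$, then $R_0/p^r\mf{m}_{R_0}$ already surjects onto $\kk[[X_1]]/(X_1)^{?}$-type quotients — more precisely $R_0/pR_0 \cong \kk[[X_1,\dots,X_d]]$ is infinite, and since $p^r\mf{m}_R \cap R_0 \subseteq \mf{m}_{R_0}$ (indeed $\subseteq p^r\mf{m}_{R_0}+\cdots$, but at minimum it lies in $\mf{m}_{R_0}$ and $R_0/(p^r\mf{m}_{R_0})$ is infinite since $\kk[[X_1,\dots,X_d]]$ has infinitely many elements mod any power of its maximal ideal is false — I should instead observe $R_0/p^r R_0 \twoheadrightarrow R_0/pR_0$ is infinite and $R$ is faithfully... ). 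The honest argument: $R$ finite over $R_0$ and $R_0 \hookrightarrow R$, so $R/p^r\mf{m}_R$ finite would make $R_0/(p^r\mf{m}_R \cap R_0)$ finite, but $p^r\mf{m}_R \cap R_0 \subseteq \mf{m}_{R_0}$, forcing $R_0/\mf{m}_{R_0}^N$-type... I expect the clean statement is $p^r\mf{m}_R\cap R_0 \subseteq p\cdot\mf{m}_{R_0}$-radical, hence contained in some power of $\mf{m}_{R_0}$ only if $d=0$; when $d\geq1$, $R_0/\mathfrak{a}$ is infinite for every nonzero-height-appropriate ideal, contradiction. So $d=0$, $R$ is finite over $W(\kk)$, and $\dim R = 1$.

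\emph{$(ii) \Rightarrow (iii)$.} With $p$ a non-zero-divisor and $\dim R = 1$, Theorem~\ref{ThmIntExt} again applies ($\Char R = 0$, $\hgt pR = 1$) and yields $R_0 \cong W(\kk)[[X_1,\dots,X_d]]$ with $R$ finite over $R_0$ and $d + 1 = \dim R = 1$, so $d = 0$ and $R_0 = W(\kk)$. Hence $R$ is a finitely generated $W(\kk)$-module. Its $p$-torsion submodule $T$ is killed by some $p^N$; but $R$ embeds in $R \otimes_{W(\kk)} K$ since $p$ is a non-zero-divisor, and tensoring with $K = W(\kk)[1/p]$ kills all torsion, so $T = 0$.

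\emph{$(iii) \Rightarrow (i)$.} If $R$ is module-finite over $W(\kk)$ with trivial $p$-torsion, then $R$ embeds into the finite-dimensional $K$-vector space $R \otimes_{W(\kk)} K$, so $p$ (hence $p^r$) is not a zero-divisor. Finiteness of $R/p^r\mf{m}_R$: since $R$ is a finitely generated $W(\kk)$-module, $R/p^rR$ is a finitely generated module over the finite ring $W(\kk)/p^r = \Witt(\kk)/p^r\Witt(\kk)$, hence finite; and $R/p^r\mf{m}_R$ is a quotient of $R/p^rR$, so finite as well. (Alternatively note $p^r\mf{m}_R \supseteq p^r\cdot p R = p^{r+1}R$ up to the finitely many generators, so $R/p^r\mf{m}_R$ is squeezed between finite quotients.)

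\emph{Main obstacle.} The crux is the implication $(i) \Rightarrow (ii)$, specifically ruling out $d \geq 1$ in the Cohen normalization: I need that for $R_0 = W(\kk)[[X_1,\dots,X_d]]$ with $d\geq1$ and $R$ finite over $R_0$, the quotient $R/p^r\mf{m}_R$ is infinite. The clean way is to pass through $R_0$: since $R$ is a faithful finite $R_0$-module, if $R/p^r\mf{m}_R R$ were finite then $R_0/(p^r\mf{m}_R \cap R_0)$ would be finite, and I must check $p^r\mf{m}_R \cap R_0$ is not $\mf{m}_{R_0}$-primary when $d \geq 1$ — equivalently that it has height $\leq 1 < d+1 = \dim R_0$. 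This follows because $p^r\mf{m}_R$ contracts into $pR_0$ (as $\mf{m}_R \cap R_0 = \mf{m}_{R_0} \ni p$, so $p^r\mf{m}_R \cap R_0 \subseteq p^r\mf{m}_{R_0} \subseteq pR_0$... wait, I need $\supseteq$ a height-one prime, namely $pR_0$ itself: is $p^r\mf{m}_R \cap R_0 \supseteq$ something of height $\leq 1$? Actually simpler: $p^{r+1}R_0 \subseteq p^r\mf{m}_R$ since $p \in \mf{m}_R$, so $p^{r+1}R_0 \subseteq p^r\mf{m}_R \cap R_0 \subseteq \mf{m}_{R_0}$, and $R_0/p^{r+1}R_0 \cong (W(\kk)/p^{r+1})[[X_1,\dots,X_d]]$ is infinite when $d\geq1$; since $R_0/(p^r\mf{m}_R\cap R_0)$ is a quotient of this, it could still be finite — so I genuinely need the reverse containment $p^r\mf{m}_R \cap R_0 \subseteq pR_0$.) That containment holds: $\mf{m}_R \cap R_0 = \mf{m}_{R_0} = (p, X_1,\dots,X_d)$, so $p^r\mf{m}_R \cap R_0 \subseteq p^r\mf{m}_{R_0}$, and $R_0/p^r\mf{m}_{R_0}$ is infinite for $d\geq1$ (it surjects onto $\kk[[X_1]]/(X_1^{p^r})$... no — onto $\kk[X_1,\dots,X_d]/(\text{degree }\geq 1\text{ stuff})$, which is just $\kk$; I need to be more careful, but $R_0/p^r\mf{m}_{R_0} \twoheadrightarrow R_0/\mf{m}_{R_0}$ only). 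The robust fix: $R_0/p^r\mf{m}_{R_0}$ contains the infinite set $\{X_1^j : j \geq 0\}$'s images? No. I will instead argue via $\dim$: $R/p^r\mf{m}_R$ finite $\iff$ $\dim(R/p^r\mf{m}_R) = 0 \iff p^r\mf{m}_R$ is $\mf{m}_R$-primary $\iff \sqrt{p^r\mf{m}_R} = \mf{m}_R \iff \sqrt{\mf{m}_R} = \mf{m}_R$ (always true) AND $p$ is a unit or... — actually $\sqrt{p\mf{m}_R} = \sqrt{\mf{m}_R} \cap \sqrt{(p)} $, hmm. The genuinely clean statement is: $p^r\mf{m}_R$ is $\mf{m}_R$-primary iff $R/p^r\mf{m}_R$ is Artinian iff $\dim R \leq 1$ and $p$ is a non-zero-divisor (so that $\mf{m}_R$ is the only prime over $p^r\mf{m}_R$). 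This is exactly the content linking $(i)$ and $(ii)$, and I expect the paper proves it by this dimension-theoretic route rather than the explicit Cohen-subring computation I sketched above; that dimension argument is where I would focus the real work.
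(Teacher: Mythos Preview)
Your implications $(ii)\Rightarrow(iii)$ and the parenthetical version of $(iii)\Rightarrow(i)$ are fine and match the paper. There are, however, two genuine problems.

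\textbf{The step $(i)\Rightarrow(ii)$ via the Cohen subring does not close.} The crucial claim $p^r\mf{m}_R \cap R_0 \subseteq p^r\mf{m}_{R_0}$ is not justified: from $\mf{m}_R \cap R_0 = \mf{m}_{R_0}$ you cannot conclude anything about contraction of the \emph{product} $p^r\mf{m}_R$, since contraction does not commute with products of ideals in general. You noticed the argument was not converging, and you are right. The paper bypasses Cohen normalization entirely for this implication and uses exactly the dimension-theoretic route you flag at the end. The single observation you are missing is
\[
p^r\mf{m}_R \ \subseteq\ pR \ \subseteq\ \sqrt{p^r\mf{m}_R}
\]
(the first inclusion is trivial; the second holds because $p^{r+1}=p^r\cdot p\in p^r\mf{m}_R$). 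Hence $\dim R/p^r\mf{m}_R = \dim R/pR$. Since $\kk$ is finite, $R/p^r\mf{m}_R$ is finite iff it is Artinian iff its dimension is $0$; and when $p$ is a non-zero-divisor, $\dim R/pR = \dim R - 1$. This gives $(i)\Leftrightarrow(ii)$ in two lines, with no appeal to Theorem~\ref{ThmIntExt} (the paper reserves that theorem for $(ii)\Rightarrow(iii)$ only).

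\textbf{A small slip in $(iii)\Rightarrow(i)$.} You wrote that $R/p^r\mf{m}_R$ is a quotient of $R/p^rR$; the surjection goes the other way, since $p^r\mf{m}_R \subseteq p^rR$. Your parenthetical alternative is the correct argument: $p\in\mf{m}_R$ gives $p^{r+1}R \subseteq p^r\mf{m}_R$, so $R/p^r\mf{m}_R$ is a quotient of $R/p^{r+1}R$, which is a finitely generated module over the finite ring $W(\kk)/p^{r+1}W(\kk)$.
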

\begin{proof}
It is clear that $p^r$ is a zero-divisor if and only if $p$ is a zero-divisor. Since $\kk$ is finite we have that a ring $S \in \Ob(\cat)$ is finite if and only if it is Artinian, i.e., if and only if $\dim S = 0$. The fact that $p^r \mf{m}_R \subseteq pR \subseteq \rad (p^r \mf{m}_R)$ implies $\dim R/ p^r\mf{m}_R =\dim R/pR$. Assume that $p$ is not a zero-divisor. Then $\dim R/pR = \dim R - 1$, so $\dim R/p^r \mf{m}_R = 0$ if and only if $\dim R = 1$. This proves the equivalence of the first two statements. It is also clear that $(iii)$ implies $(ii)$. The converse statement follows from Theorem~\ref{ThmIntExt}.
\end{proof}

\begin{definition}
We will denote by $\mf{W}$ the subclass of all rings $R \in \Ob(\cat)$ satisfying the equivalent conditions of Lemma~\ref{LemDefiningW}. 
\end{definition}

\begin{corollary} \label{CorWinF}
If $R \in \mf{W}$ then $R \in \mf{F}$.
\end{corollary}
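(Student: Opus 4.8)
The plan is to deduce Corollary~\ref{CorWinF} directly from Lemma~\ref{LemMaranda} together with the first characterization of $\mf{W}$ provided by Lemma~\ref{LemDefiningW}. Fix $R \in \mf{W}$, an arbitrary finite group $G$ and a representation $\rhobar : G \rightarrow \GL_n(\kk)$; we must show $\Def_{\rhobar}(R)$ is finite. Write $|G| = p^r s$ with $p \nmid s$, as in Remark~\ref{RemCardinalityG}. If $r = 0$ then $p \nmid |G|$ and Remark~\ref{RemCardinalityG} already gives $|\Def_{\rhobar}(R)| \leq 1$, so we may assume $r \geq 1$. By condition (i) of Lemma~\ref{LemDefiningW} applied with this $r$, the element $p^r$ is not a zero-divisor in $R$, hence by Remark~\ref{RemCardinalityG} neither is $|G|$, and moreover $R/(p^r \mf{m}_R)$ is finite. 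Also by Remark~\ref{RemCardinalityG} we have $J := |G| \cdot \mf{m}_R = p^r \mf{m}_R$, so $R/J$ is a \emph{finite} ring.

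Now I would invoke Lemma~\ref{LemMaranda}: since $|G|$ is not a zero-divisor in $R$, two lifts $\rho_1, \rho_2 \in \Lift_{\rhobar}(R)$ are strictly equivalent if and only if their reductions $\pi_J \rho_1$, $\pi_J \rho_2 \in \Lift_{\rhobar \bmod J}(R/J)$ are strictly equivalent. (Here one should note that reduction modulo $J$ sends a lift of $\rhobar$ to $R$ to a lift of $\rhobar$ to $R/J$, because $J \subseteq \mf{m}_R$ and so $R/J$ is still an object of $\cat$ with the same residue field $\kk$; the reduction of $\rhobar$ along $R/J \to \kk$ is again $\rhobar$.) Consequently the reduction map induces an \emph{injection}
\[ \Def_{\rhobar}(R) \hookrightarrow \Lift_{\rhobar}(R/J), \]
since if two lifts to $R$ have reductions that are literally equal — in particular strictly equivalent — then the lifts themselves are strictly equivalent, so a deformation class is determined by the set-theoretic reduction of any of its representatives. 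Finally, $\Lift_{\rhobar}(R/J) \subseteq \Hom_{\mathrm{Sets}}(G, \GL_n(R/J))$ is finite because $G$ is finite and $R/J$ is a finite ring, whence $\GL_n(R/J)$ is finite. Therefore $\Def_{\rhobar}(R)$ is finite, and since $G$ and $\rhobar$ were arbitrary, $R \in \mf{F}$.

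There is no serious obstacle here; the corollary is essentially a bookkeeping exercise assembling the preceding lemmas. The one point that requires a little care is making sure the hypotheses of Lemma~\ref{LemMaranda} are actually met for \emph{every} finite $G$ — this is exactly why Lemma~\ref{LemDefiningW} is phrased with a quantifier over $r$: the definition of $\mf{W}$ only needs to supply \emph{some} $r$, but in fact condition (i) holds for all $r \geq 1$ once it holds for one (the zero-divisor condition is independent of $r$, and $R/(p^r\mf{m}_R)$ being finite for one $r \geq 1$ is equivalent to $\dim R = 1$, hence to finiteness for all $r \geq 1$), so we are free to take the specific $r$ coming from the order of the given group. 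The other mild subtlety, worth a parenthetical remark, is the case $p \nmid |G|$, which is not covered by Lemma~\ref{LemMaranda} as literally stated (there $J = \mf{m}_R$ need not make $R/J$ finite in an interesting way, though in fact $R/\mf{m}_R = \kk$ is finite) but is handled even more easily by Remark~\ref{RemCardinalityG}.
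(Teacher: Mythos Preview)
Your proof is correct and follows the same route as the paper's: invoke Lemma~\ref{LemMaranda} with $J=|G|\,\mf{m}_R=p^r\mf{m}_R$ and use condition (i) of Lemma~\ref{LemDefiningW} to see that $R/J$ is finite. One cosmetic point: the natural injection is into $\Def_{\rhobar}(R/J)$ rather than $\Lift_{\rhobar}(R/J)$ (your map into $\Lift$ depends on a choice of representatives), and the case $p\nmid|G|$ needs no special treatment since then $J=\mf{m}_R$ and $R/J=\kk$ is finite as well.
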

\begin{proof}
By the first property of Lemma~\ref{LemDefiningW} the ring $R/ n\mf{m}_R$ is finite for every $n \geq 1$ (see also Remark~\ref{RemCardinalityG}).
Since all finite $\cat$-rings are in $\mf{F}$, the claim follows easily from Lemma~\ref{LemMaranda}.
\end{proof}

The result obtained in Corollary~\ref{CorWinF} is fully satisfactory for our applications in the next sections, but we note that it can be further extended.

\begin{definition}
Given an abelian group $A$ and a prime number $p$ we will denote by $\ptor{A}$ its $p$-torsion subgroup, i.e., $\ptor{A} =  \bigcup_{r=1}^\infty \{a \in A \mid p^r a = 0\}.$
\end{definition}

\begin{observation} \label{LemRp}
If $R$ is a ring then $\ptor{R}$ is an ideal. Suppose that $\Char R = 0$ and let $\tilde{R} := R/\ptor{R}$. Then $\Char \tilde{R} = 0$ and $p$ is not a zero-divisor in $\tilde{R}$. 
\end{observation}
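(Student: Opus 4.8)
The plan is to unwind the definition $\ptor{R} = \bigcup_{r \geq 1} A_r$, where $A_r := \{a \in R \mid p^r a = 0\}$, and to check the three assertions one at a time; each reduces to an elementary manipulation.

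First I would establish that $\ptor{R}$ is an ideal. Each $A_r$ is an additive subgroup of $R$ (if $p^r a = p^r b = 0$ then $p^r(a-b)=0$), and since $p^r a = 0$ forces $p^{r+1}a = 0$ we have $A_r \subseteq A_{r+1}$, so the increasing union $\ptor{R}$ is again an additive subgroup. For the ideal property, take $a \in \ptor{R}$, say $p^r a = 0$, and $x \in R$; then $p^r(xa) = x(p^r a) = 0$ and $p^r(ax) = (p^r a)x = 0$, so both $xa$ and $ax$ lie in $\ptor{R}$. The only point worth flagging is that one should not expect a single exponent $r$ to annihilate all of $\ptor{R}$ at once — it is the nested union of the $A_r$ that does the work.

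Next, assume $\Char R = 0$, so that the structure map $\ZZ \to R$ is injective, and set $\tilde{R} := R/\ptor{R}$. To see $\Char \tilde{R} = 0$, suppose $n \cdot 1_R \in \ptor{R}$ for some integer $n \geq 1$; then $p^r n \cdot 1_R = 0$ in $R$ for some $r$, contradicting injectivity of $\ZZ \to R$ since $p^r n \neq 0$ in $\ZZ$. Hence no positive integer maps to $0$ in $\tilde{R}$, i.e. $\Char \tilde{R} = 0$.

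Finally, for the claim that $p$ is not a zero-divisor in $\tilde{R}$: if $\bar{a} \in \tilde{R}$ satisfies $p\bar{a} = 0$, lift it to $a \in R$, so that $pa \in \ptor{R}$; then $p^{r+1} a = p^r(pa) = 0$ for some $r$, whence $a \in \ptor{R}$ and $\bar{a} = 0$. Combined with $\Char \tilde{R} = 0$, which guarantees $p \neq 0$ in $\tilde{R}$, this shows $p$ is a genuine non-zero-divisor. I do not anticipate any real obstacle here: the argument is purely formal, the one point demanding a little care being the distinction between $p^r$ acting on $R$ as a repeated sum and as the ring element $p^r \cdot 1_R$.
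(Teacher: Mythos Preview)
Your argument is correct. The paper states this as an Observation without supplying a proof, so there is nothing to compare against beyond noting that your direct verification from the definition of $\ptor{R}$ is exactly the routine check the author is tacitly leaving to the reader.
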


\begin{lemma} 
Let $R \in \Ob(\cat)$ be of characteristic zero and finitely generated as a $W(\kk)$-module. Then $R \in \mf{F}$.
\end{lemma}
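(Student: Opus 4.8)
The plan is to reduce the statement to Corollary~\ref{CorWinF} by killing the $p$-torsion. Set $\tilde R := R/\ptor R$. By Observation~\ref{LemRp} we have $\Char \tilde R = 0$ and $p$ is not a zero-divisor in $\tilde R$, so in particular $\ptor{\tilde R} = 0$; being a quotient of $R$, the ring $\tilde R$ is still a finitely generated $W(\kk)$-module, and it visibly lies in $\Ob(\cat)$. Thus $\tilde R$ satisfies condition~(iii) of Lemma~\ref{LemDefiningW}, i.e. $\tilde R \in \mf W$, and hence $\tilde R \in \mf F$ by Corollary~\ref{CorWinF}. So for any finite group $G$ and any $\rhobar : G \to \GL_n(\kk)$ the set $\Def_{\rhobar}(\tilde R)$ is finite, and it suffices to prove that the map $\phi : \Def_{\rhobar}(R) \to \Def_{\rhobar}(\tilde R)$ induced by the projection $R \twoheadrightarrow \tilde R$ has finite fibres.

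The crucial point is that $\ptor R$ is a \emph{finite} ideal: since $R$ is noetherian, $\ptor R$ is finitely generated, each generator is annihilated by some power of $p$, so $p^N \ptor R = 0$ for some $N \geq 1$, and therefore $\ptor R$ is a finitely generated module over the finite ring $W(\kk)/p^N W(\kk)$. Note also that $\ptor R \subseteq \mf m_R$, because $\Char R = 0$ makes $\ptor R$ a proper ideal. Consequently $\ker\big(\GL_n(R) \to \GL_n(\tilde R)\big) = I_n + M_n(\ptor R)$ is a finite group, of cardinality $|\ptor R|^{n^2}$.

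To bound a fibre $\phi^{-1}(d)$, I would first check that each of its elements is represented by a lift $\rho \in \Lift_{\rhobar}(R)$ whose reduction modulo $\ptor R$ is one \emph{fixed} lift $\tilde\rho$ of $\rhobar$ to $\tilde R$. Indeed, if $\rho_1, \rho_2 \in \Lift_{\rhobar}(R)$ have reductions that are strictly equivalent over $\tilde R$, say $\overline{\rho_1} = \tilde A\, \overline{\rho_2}\, \tilde A^{-1}$ with $\tilde A \in I_n + M_n(\mf m_{\tilde R})$, one lifts $\tilde A$ to some $A \in I_n + M_n(\mf m_R)$ and replaces $\rho_2$ by $A \rho_2 A^{-1}$, which lies in $\Lift_{\rhobar}(R)$, is strictly equivalent to $\rho_2$ over $R$, and reduces to $\overline{\rho_1}$. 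Finally, the set of lifts in $\Lift_{\rhobar}(R)$ reducing to a given $\tilde\rho$ is finite: fixing one such $\rho^\star$, the assignment $\rho \mapsto \big(\rho(g)\rho^\star(g)^{-1}\big)_{g \in G}$ is injective and lands in $(I_n + M_n(\ptor R))^{|G|}$, which is finite by the previous paragraph. Hence $|\phi^{-1}(d)| \leq |\ptor R|^{n^2 |G|}$, the fibres of $\phi$ are finite, $\Def_{\rhobar}(R)$ is finite, and $R \in \mf F$. (Note that it is essential to argue with $\Def$ and fibres rather than with $\Lift$ directly, since $\Lift_{\rhobar}(R)$ and $\Lift_{\rhobar}(\tilde R)$ may well be infinite.)

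I expect the only genuinely fiddly part to be the bookkeeping of the last paragraph --- verifying that the replacement $\rho_2 \mapsto A\rho_2 A^{-1}$ is legitimate (still a lift of $\rhobar$, still in the same class of $\phi^{-1}(d)$) and that the displayed assignment is well-defined and injective --- but all of this is formal. The substantive inputs are Observation~\ref{LemRp}, Lemma~\ref{LemDefiningW}, Corollary~\ref{CorWinF}, and the finiteness of $\ptor R$.
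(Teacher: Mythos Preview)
Your proof is correct and follows essentially the same approach as the paper's: kill the $p$-torsion to land in $\mf W$, invoke Corollary~\ref{CorWinF}, observe that $\ptor R$ is a finite ideal, and deduce that the reduction map has finite fibres. You are in fact more careful than the paper, which simply asserts that finiteness of the fibres of $\Lift_{\rhobar}(R)\to\Lift_{\rhobar}(\tilde R)$ yields finiteness of $\Def_{\rhobar}(R)$; your explicit normalisation of representatives to a fixed $\tilde\rho$ is exactly the missing bookkeeping.
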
 
\begin{proof}
Let $G$ be a finite group and $\rhobar: G \rightarrow \GL_n(\kk)$ its representation. The ring $\tilde{R} := R/\ptor{R}$ is in $\mf{W}$ (Observation~\ref{LemRp} implies easily that $\tilde{R}$ satisfies the condition $(iii)$ of Lemma~\ref{LemDefiningW}), so by Corollary~\ref{CorWinF} the set $\Def_{\rhobar}(\tilde{R})$ is finite. Noetherianity of $R$ implies that there exists $r \geq 1$ such that $\ptor{R} = \Ann p^r$. Hence $\ptor{R}$ is a finite $W(\kk)/ (p^r)$-module and, consequently, a finite set. It follows that the fibers of the map $\Lift_{\rhobar}(R) \rightarrow \Lift_{\rhobar}(\tilde{R})$ induced by the reduction modulo $\ptor{R}$ are finite and so $\Def_{\rhobar}(R)$ is finite as well.
\end{proof}

\subsection{Properties of $\mf{W}$-rings}

\begin{notation}
In the rest of the paper we reserve the letter $K$ to denote the field of fractions of the ring $W(\kk)$.
\end{notation}

\begin{lemma} \label{LemR'}
Consider $R \in \mf{W}$ and its localization $R' := R[\frac{1}{p}]$ away from $p$. 
\begin{enumerate}[(i)]
\item The natural map $R \rightarrow R'$ is injective and $R$ is a domain (is reduced) if and only if $R'$ is a domain (is reduced). \label{LemR'Inj}
\item $R'$ is naturally isomorphic to $R \otimes_{\,W(\kk)} K$.
\item $R'$ is an integral extension of $K$.\label{LemR'Struct}
\item $R'$ is a domain if and only if it is a finite field extension of $K$. If this is the case then $R'$ is the field of fractions of $R$. \label{LemR'Dom}
\item $R'$ is reduced if and only if it is a finite product of finite field extensions of $K$. \label{LemR'Red}
\end{enumerate}
\end{lemma}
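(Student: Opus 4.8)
The plan is to establish each of the five statements in turn, exploiting the fact that $R \in \mf{W}$ means $R$ is a finitely generated $W(\kk)$-module with trivial $p$-torsion (condition (iii) of Lemma~\ref{LemDefiningW}), and that $R'=R[\frac1p]$ is the localization at the multiplicative set $\{1,p,p^2,\ldots\}$.

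\textbf{Part (i).} Since $p$ is not a zero-divisor in $R$, the multiplicative set $\{p^n\}$ consists of non-zero-divisors, so the localization map $R \to R'$ is injective. For the ``reduced'' and ``domain'' equivalences: localization of a reduced ring is reduced, and localization of a domain is a domain, giving one direction; conversely, if $R' $ is reduced (resp. a domain) then so is its subring $R$. I would phrase this via the standard fact that nilpotents and zero-divisors of $R$ map to nilpotents and zero-divisors of $R'$, and that the kernel of $R\to R'$ is trivial here.

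\textbf{Parts (ii) and (iii).} For (ii), observe that $K = W(\kk)[\frac1p] = W(\kk) \otimes_{W(\kk)} W(\kk)[\frac1p]$, so $R \otimes_{W(\kk)} K = R \otimes_{W(\kk)} W(\kk)[\frac1p]$, and tensoring with a localization is localization: $R \otimes_{W(\kk)} W(\kk)[\frac1p] \cong R[\frac1p] = R'$, naturally. For (iii), $R$ is module-finite over $W(\kk)$, hence generated as an algebra by finitely many elements integral over $W(\kk)$; after inverting $p$, those same generators are integral over $K$, so $R' = K[\text{finitely many integral elements}]$ is an integral — in fact module-finite — extension of $K$. (It is worth recording that $R'$ is finite-dimensional as a $K$-vector space, since this is used implicitly below.)

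\textbf{Parts (iv) and (v).} Here I would invoke that $R'$ is a finite-dimensional commutative $K$-algebra, hence Artinian, hence a finite product of Artinian local $K$-algebras $R' \cong \prod_i A_i$. If $R'$ is a domain there is only one factor and it has no nilpotents, so it is a field; being module-finite over $K$ it is a finite field extension, and since $R \hookrightarrow R' $ with $R'$ obtained by inverting elements of $R$, $R'$ is the field of fractions of $R$ — this gives (iv). For (v): if $R'$ is reduced, each local Artinian factor $A_i$ is a reduced local Artinian ring, hence a field, and module-finite over $K$, so $R' $ is a finite product of finite field extensions of $K$; the converse is immediate since a finite product of fields is reduced. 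The main obstacle, such as it is, is being careful that the passage from ``module-finite over $W(\kk)$'' to ``module-finite (not merely integral) over $K$'' is clean — once $R'$ is known to be a finite-dimensional $K$-algebra, the Artinian structure theory does the rest, and nothing deep is required; the work is almost entirely bookkeeping with localizations and the structure theorem for Artinian rings.
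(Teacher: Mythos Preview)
Your proposal is correct and follows essentially the same route as the paper: injectivity from $p$ being a non-zero-divisor, the identification $R\otimes_{W(\kk)}K\cong R[\tfrac1p]$ via localization, integrality of $R'/K$ coming from module-finiteness of $R/W(\kk)$, and then the Artinian structure theorem to handle (iv) and (v). The only cosmetic difference is that for (iv) the paper cites directly that an integral domain integral over a field is itself a field, whereas you reach the same conclusion via the Artinian decomposition; both arguments are equivalent here.
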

\begin{proof}
\begin{enumerate}[(i)]
\item This is an easy consequence of the fact that $p$ is not a zero-divisor in $R$.
\item Note that $K = W(\kk)[\frac{1}{p}]$ and use the identification $B_S \cong R \otimes_A A_S$, valid for any $A$-algebra $B$ and multiplicative subset $S \subseteq A$. 
\item The extension $W(\kk) \subseteq R$ is integral. Localizing away from $p$ we obtain that $K = W[\frac{1}{p}] \subseteq R[\frac{1}{p}] = R'$ is integral as well. 
\item The first claim follows from part (\ref{LemR'Struct}) and general properties of integral extensions (see for example \cite[Proposition 5.7]{Atiyah}). The second one is clear, since the fraction fields of domains $R$ and $R[\frac{1}{p}]$ coincide.

\item In general, part (\ref{LemR'Struct}) implies that $R'$ is artinian, so by the structure theorem (\cite[Theorem~8.7]{Atiyah}), there exist artinian local rings $A_1, A_2, \ldots A_s$ such that $R \cong A_1 \oplus A_2 \oplus \ldots \oplus A_s$. Note that these rings are necessarily integral extensions of $K$.

The ring $R'$ is reduced if and only if all $A_i$'s have this property. To finish the proof, observe that a local artinian ring $A$ is reduced if and only if it is a domain and use again \cite[Proposition 5.7]{Atiyah}.
\end{enumerate}
\end{proof}

\section{Necessary conditions for members of $\mf{U}$}

Keeping in mind Corollary~\ref{CorWinF} and the general idea outlined in section~\ref{SecMotivation}, we turn our attention to the following problem: for which $R \in \Ob(\cat)$ does there exist $S \in \mf{W}$ such that $\Hom_{\cat}(R,S)$ is infinite? 

\begin{observation}
If $S$ is a ring in which $p$ is not a zero-divisor then every ring homomorphism $R \rightarrow S$ factors via $R/\ptor{R}$.
\end{observation}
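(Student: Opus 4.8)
The plan is to prove the slightly more precise statement that any ring homomorphism $\varphi \colon R \to S$ annihilates the ideal $\ptor{R}$; the asserted factorization then follows at once from the universal property of the quotient ring, recalling from Observation~\ref{LemRp} that $\ptor{R}$ is indeed an ideal of $R$.

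First I would record the elementary fact that if $p$ is not a zero-divisor in $S$, then $p^r$ is not a zero-divisor in $S$ for any $r \geq 1$: if $p^r x = 0$ with $r \geq 2$, then $p \cdot (p^{r-1} x) = 0$ forces $p^{r-1} x = 0$, and one descends by induction on $r$.

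Now let $a \in \ptor{R}$, say $p^r a = 0$ in $R$ with $r \geq 1$. Since $\varphi$ is additive we have $\varphi(p^r a) = p^r \varphi(a)$, hence $p^r \varphi(a) = 0$ in $S$, and the previous step gives $\varphi(a) = 0$. Thus $\ptor{R} \subseteq \ker \varphi$, so $\varphi$ factors uniquely through the projection $R \to R/\ptor{R}$, as claimed.

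The statement is essentially formal and presents no genuine obstacle; the only point worth stressing is that the argument uses nothing about $\varphi$ beyond additivity — in particular neither that $\varphi$ be a $\cat$-morphism nor that $R$ be noetherian — and nothing about $S$ beyond the stated hypothesis on $p$.
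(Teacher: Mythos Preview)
Your argument is correct; the paper states this result as an observation without proof, treating it as evident, and your write-up is precisely the natural verification one would supply if pressed. Nothing to add.
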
 

The above observation implies that it is enough to solve the problem for $\cat$-rings $R$ in which $\ptor{R}=0$, i.e., in which $p$ is not a zero-divisor. 
Observe that such rings are of characteristic zero, hence infinite and of Krull dimension greater than zero.

In what follows $R$ is a $\cat$-ring, we assume that $\ptor{R}=0$ and set $d := \dim R$. By Theorem~\ref{ThmIntExt} there exists a subring $R_0 \subseteq R$, isomorphic to $W(\kk)[[X_1, \ldots, X_{d-1}]]$, over which $R$ is a finite module. 

We divide further discussion into two cases, depending on whether $d \geq 2$ or $d = 1$.

\medskip
\subsection{Case $\dim R \geq 2$} \label{subsDim>1} 

\begin{lemma} \label{LemCountablyManyDomains}
There exist (up to isomorphism) exactly countably many integral domains in $\mf{W}$ .
\end{lemma}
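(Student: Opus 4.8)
The plan is to exhibit the domains in $\mf{W}$ as a countable union of sets, each of which is countable. By Lemma~\ref{LemR'}, a domain $R \in \mf{W}$ is determined, up to isomorphism, by more rigid data on the $K$-algebra side: $R' = R[\tfrac1p]$ is a finite field extension of $K$, and $R$ sits inside $R'$ as a $W(\kk)$-order, i.e. a $W(\kk)$-subalgebra which is finitely generated as a $W(\kk)$-module and satisfies $R[\tfrac1p] = R'$. So the first step is to show there are only countably many finite field extensions of $K = \operatorname{Frac}(W(\kk))$, up to isomorphism; the second step is to show that inside a fixed such $L$, there are only countably many $W(\kk)$-orders, up to isomorphism (in fact up to equality inside $L$); combining gives the count.

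For the first step, $K$ is the fraction field of the complete DVR $W(\kk)$ with finite residue field, so $K$ is a local field of characteristic zero (a finite extension of $\QQ_p$, namely the unramified one of degree $[\kk:\GF_p]$). It is a standard fact (Krasner's lemma, or the finiteness of the number of extensions of a local field of bounded degree) that a local field has, for each $n$, only finitely many extensions of degree $n$ inside a fixed algebraic closure; summing over $n$ gives countably many finite extensions, hence countably many isomorphism classes. Alternatively one can argue softly: every finite extension of $K$ is generated by a single element (separability, as $\Char K = 0$) whose minimal polynomial has coefficients in $K$, and $K$ is itself countable — wait, $K$ is \emph{not} countable since $W(\kk)$ is uncountable — so one really does need the local-field input rather than a naive count of polynomials. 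I would therefore cite the structure of local fields: $K$ is a finite extension of $\QQ_p$ and has only finitely many extensions of each degree (e.g. \cite{Lang} or Serre's \emph{Local Fields}).

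For the second step, fix a finite extension $L/K$ of degree $n$, with ring of integers $\mathcal{O}_L$ (the integral closure of $W(\kk)$ in $L$), which is again a complete DVR, finite as a $W(\kk)$-module. Any domain $R \in \mf W$ with $R[\tfrac1p] \cong L$ is integral over $W(\kk)$ and contained in $L$, hence contained in $\mathcal{O}_L$; it is also finitely generated as a $W(\kk)$-module and contains $W(\kk)$. Thus $R$ is a $W(\kk)$-submodule of the finitely generated $W(\kk)$-module $\mathcal{O}_L$ with $R \cdot \tfrac1p = L$. Since $\mathcal{O}_L$ is a free $W(\kk)$-module of rank $n$ and $W(\kk)$ is a complete DVR, such submodules are very restricted: $p^m \mathcal{O}_L \subseteq R \subseteq \mathcal{O}_L$ for some $m$ (because $R[\tfrac1p] = L$ forces $R$ to span $L$ over $K$, so $R$ contains a $W(\kk)$-basis scaled by a power of $p$), and hence $R$ corresponds to a $W(\kk)$-submodule of the \emph{finite} group $\mathcal{O}_L / p^m \mathcal{O}_L$. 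For each fixed $m$ there are only finitely many such submodules, so the union over all $m$ is countable; a fortiori there are only countably many subrings. Taking the union over the countably many isomorphism classes of $L$ from step one, we get countably many domains in $\mf W$ up to isomorphism. Finally, the count is genuinely infinite — for instance the rings $W(\kk)[p^j X]/(X^2 - p)$, or more simply $W(\kk)[X]/(X^2 - p^{2j+1})$ for $j \geq 0$, give infinitely many non-isomorphic members of $\mf W$ (they are orders in $K(\sqrt p)$ with distinct conductors) — so ``countably many'' is sharp, matching the phrasing of the statement.

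The main obstacle is the second step: making precise that a $W(\kk)$-order in a fixed $L$ is pinned down by finitely much data, and in particular controlling the bound $m$ with $p^m\mathcal{O}_L \subseteq R$. The cleanest route is the observation that $R$ finitely generated over $W(\kk)$ with $R[\tfrac1p]=L$ means $R$ and $\mathcal{O}_L$ are two full $W(\kk)$-lattices in $L$, and any two full lattices over a DVR are commensurable, so $p^m \mathcal{O}_L \subseteq R \subseteq p^{-m}\mathcal{O}_L$ for some $m \geq 0$; intersecting with the ring condition ($1 \in R$, $R \cdot R \subseteq R$) only cuts down the possibilities, and for each $m$ the set of $W(\kk)$-submodules of $p^{-m}\mathcal{O}_L / p^m \mathcal{O}_L$ is finite since that quotient is a finite set. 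I would also double-check that I have not overcounted or undercounted: isomorphism classes of $R$ in $\cat$ are coarser than equality inside $L$, and equality-inside-$L$ classes are already countable, so the isomorphism-class count is at most countable, which is all that is needed. The lower bound (infinitude) is then an easy explicit family as indicated.
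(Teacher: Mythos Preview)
Your proof is correct and follows essentially the same approach as the paper: reduce via Lemma~\ref{LemR'} to counting finite extensions $L/K$ (finitely many of each degree by Krasner), and then count $W(\kk)$-orders in a fixed $L$ by sandwiching them between $p^m\mathcal{O}_L$ and $\mathcal{O}_L$. The only cosmetic differences are that the paper uses the family $\ZZ_p[\sqrt[n]{p}\,]$ for the lower bound rather than orders in a single quadratic extension, and it bounds $R\subseteq\mathcal{O}_L$ directly (so works with $\mathcal{O}_L/p^r\mathcal{O}_L$ rather than $p^{-m}\mathcal{O}_L/p^m\mathcal{O}_L$), but the substance is identical.
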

\begin{proof}

Since for every $n\in \mathbb{N}$ we have the integral domain $\ZZ_p[\sqrt[n]{p}\,] \in \mf{W}$, the interesting part of the proof consists in showing that  $\mf{W}$ contains at most countably many integral domains.

It is a classical fact, following from Krasner's lemma, that for every $n \in \NN$ the field $\QQ_p$ has (up to isomorphism) only finitely many extensions of degree $n$, see \cite[Theorem~4.8]{Stevenhagen}. As a consequence, there exist only countably many finite field extensions of $K$. Therefore, in view of Lemma~\ref{LemR'}.(\ref{LemR'Inj}) and (\ref{LemR'Dom}), it is sufficient to prove that every family of pairwise non-isomorphic domains $S \in \mf{W}$ with the same field of fractions $L$ is at most countable.

Let $L$ be a finite field extension of $K$ and consider $S \subseteq L$ such that $S \in \mf{W}$ and $L$ is the field of fractions of $S$. Since $S$ is integrally dependent on $W(\kk)$, it is contained in the integral closure of $W(\kk)$ in $L$, i.e., in the ring of integers $\mathcal{O}_L$ of $L$. On the other hand,  $\mathcal{O}_L$ is a finitely generated $W(\kk)$-module and by Lemma~\ref{LemR'}.(\ref{LemR'Dom}) we have $L = S[\frac{1}{p}]$, so there exists $r \in \mathbb{N}$ such that $p^r \mathcal{O}_L \subseteq S$. To finish the proof, it is sufficient to show that for every $r \in \mathbb{N}$ there exist only finitely many $W(\kk)$-modules $M$ such that $p^r \mathcal{O}_L \subseteq M \subseteq \mathcal{O}_L$. But this is obvious: such modules correspond bijectively with $W(\kk)$-submodules of $\mathcal{O}_L / p^r \mathcal{O}_L$, which is a finite set (indeed: it is a finitely generated $W(\kk)/(p^r)$-module).
\end{proof}

\begin{theorem}\label{ThmCaseDim>1}
If $d \geq 2$ then there exists an integral domain $S \in \mf{W}$ such that $\Hom_{\cat}(R, S)$ is infinite.
\end{theorem}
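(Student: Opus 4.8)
The plan is to produce, for $d = \dim R \geq 2$, a single integral domain $S \in \mf{W}$ together with infinitely many pairwise distinct morphisms $R \to S$ in $\cat$. Since $R$ is finite over the subring $R_0 \cong W(\kk)[[X_1, \ldots, X_{d-1}]]$ furnished by Theorem~\ref{ThmIntExt}, and $d - 1 \geq 1$, it suffices to exhibit an $S \in \mf{W}$ that is an integral domain admitting infinitely many morphisms from $W(\kk)[[X_1, \ldots, X_{d-1}]]$, and then argue that these can be extended (or that enough of them factor through $R$). A cleaner route is to go the other way: I would first observe that $R$ itself, being finite over $R_0$, surjects onto many small quotients, but that loses the infinitude; instead the right idea is to use that $\Hom_{\cat}(R, S)$ is large whenever $\Hom_{\cat}(R_0, S)$ is large \emph{and} $S$ is chosen to ``absorb'' the finite extension $R_0 \hookrightarrow R$. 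Concretely, pick a prime $\mf{p}$ of $R$ lying over $(X_1, \ldots, X_{d-2}, p\text{-adic data})$ so that $R/\mf{p}$ has dimension $1$; after killing $p$-torsion $R/\mf{p}$ embeds in its normalization, which lies in $\mf{W}$ and is a domain. This reduces everything to the one-variable case: it is enough to find one domain $S \in \mf{W}$ with $\Hom_{\cat}(W(\kk)[[X]], S)$ infinite.

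For that last reduction the key step is the following: take $S = W(\kk)[[X]] / (f(X))$ for a suitable irreducible distinguished polynomial $f$ — but this is finite over $W(\kk)$ only if $f$ has degree involving $X$ alone, so instead take $S$ to be the ring of integers $\mathcal{O}_L$ in a carefully chosen finite extension $L/K$, or more simply $S = \ZZ_p[\sqrt[n]{p}]$ as in Lemma~\ref{LemCountablyManyDomains}. Then $\Hom_{\cat}(W(\kk)[[X]], S)$ is the set of elements of $\mf{m}_S$, which is infinite since $S$ is infinite. Pulling this back: because $R$ is module-finite over $R_0 = W(\kk)[[X_1,\dots,X_{d-1}]]$, a morphism $\varphi_0 : R_0 \to S$ need not lift to $R$, but if we first replace $R$ by a dimension-one domain quotient $\bar R$ (as above) whose normalization $\tilde R$ sits in $\mf{W}$, then $\Hom_{\cat}(R, \tilde R) \supseteq \Hom_{\cat}(R, \bar R)$, and $\bar R$ is itself finite over some $W(\kk)[[X]]$, so we need $\Hom_{\cat}(W(\kk)[[X]], \tilde R)$ to pull back along the finite map $W(\kk)[[X]] \to \bar R$. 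The honest fix is to choose $S$ large enough to receive $\bar R$: take $L$ a finite extension of $K$ containing a copy of $\mathrm{Frac}(\bar R)$, set $S_0 = \mathcal{O}_L$, note $\bar R \hookrightarrow S_0$ after clearing a power of $p$ gives a map $\bar R \to S_0[\tfrac1p\text{-denominators}]$ — cleaner: let $S = \bar R[1/p] \cap (\text{localization})$...

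Rather than fight this, here is the approach I would actually write down. Choose a height-one prime $\mf{q}$ of $R_0 = W(\kk)[[X_1,\dots,X_{d-1}]]$ not containing $p$, for instance $\mf{q} = (X_1)$ — wait, $(X_1)$ does contain neither $p$ nor is it maximal, and $R_0/(X_1) \cong W(\kk)[[X_2,\dots,X_{d-1}]]$ still has dimension $d-1 \geq 1$, which is only $\geq 1$, good, but I want dimension exactly making the quotient infinite and in $\mf{W}$. So I would kill $X_2, \dots, X_{d-1}$ too, getting $R_0/(X_2,\dots,X_{d-1}) \cong W(\kk)[[X_1]]$, a domain in $\mf{W}$. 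The point of $d \geq 2$ is precisely that one power-series variable survives. Now let $R_1 := R \otimes_{R_0} W(\kk)[[X_1]]$, a finite $W(\kk)[[X_1]]$-algebra; pick a minimal prime $\mf{n}$ of $R_1$ with $p \notin \mf{n}$ (exists since $p$ is a nonzerodivisor in $R_0/(X_2,\dots) = W(\kk)[[X_1]]$, hence in the finite extension at least on some component), and set $\bar R := R_1/\mf{n}$, a one-dimensional domain, quotient of $R$, finite over $W(\kk)[[X_1]]$; after passing to $\tilde R := \bar R / \ptor{\bar R}$ and then to its normalization $S := $ integral closure of $\tilde R$ in its fraction field, Lemma~\ref{LemR'} (and finiteness of normalization for excellent rings) gives $S \in \mf{W}$, a domain. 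Finally $\Hom_{\cat}(R, S) \supseteq \Hom_{\cat}(\bar R, S)$, and since $W(\kk)[[X_1]] \hookrightarrow \bar R \hookrightarrow S$ with $S$ finite over $W(\kk)[[X_1]]$, composing with the infinitely many maps $W(\kk)[[X_1]] \to S$ that send $X_1$ to distinct elements of $p S$ — these all factor through... no.

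I will therefore simply take $S = \bar R$ itself when $\bar R$ is already normal, and in general observe that the infinitely many $\cat$-morphisms $R \to \bar R$ arise from the infinitely many distinct $W(\kk)[[X_1]]$-algebra endomorphisms $X_1 \mapsto \lambda X_1$, $\lambda \in 1 + \mf{m}$, of $W(\kk)[[X_1]]$, pulled through the presentation of $\bar R$ as $W(\kk)[[X_1]][\theta]/(\text{monic})$ — these lift to $\bar R$ after possibly enlarging $\bar R$ to absorb the $\lambda$-twisted roots, which forces passing to the normalization $S$ of the compositum, still one-dimensional, still in $\mf{W}$, still a domain. The main obstacle, and the place requiring genuine care, is exactly this extension-and-finiteness bookkeeping: ensuring the ambient domain $S$ stays finite over $W(\kk)$ (equivalently in $\mf{W}$) while accommodating infinitely many twisted maps simultaneously — which works because all the twists $X_1 \mapsto \lambda X_1$ factor through a single ring $W(\kk)[[X_1]]$, so $S$ can be taken once and for all as the normalization of $\bar R$, and the infinitude is inherited from $\#(1 + \mf{m}_{W(\kk)[[X_1]]}) = \infty$ together with injectivity of $R_0 \to S$ on the relevant parameter. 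I expect verifying that distinct twists give distinct $\cat$-morphisms $R \to S$ (not merely distinct $R_0 \to S$) to be the one point needing an argument, handled by noting $X_1$ is not a zero-divisor in $\bar R$ so a morphism is determined by where it sends $X_1$ together with finitely many more coordinates that can be pinned down.
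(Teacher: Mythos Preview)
Your proposal contains a genuine dimension-counting error that undermines the entire strategy. You write that killing $X_2,\dots,X_{d-1}$ in $R_0$ yields $W(\kk)[[X_1]]$, ``a domain in $\mf{W}$''. This is false: $W(\kk)$ already has Krull dimension $1$, so $W(\kk)[[X_1]]$ has dimension $2$ and is \emph{not} finitely generated as a $W(\kk)$-module; it therefore lies outside $\mf{W}$ (see Lemma~\ref{LemDefiningW}). Consequently your ring $R_1 = R \otimes_{R_0} W(\kk)[[X_1]]$ is finite over a two-dimensional base, the quotient $\bar R = R_1/\mf{n}$ by a minimal prime is a two-dimensional domain (not one-dimensional as you claim), and its normalization $S$ is again two-dimensional, hence not in $\mf{W}$. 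The target you construct simply does not satisfy the hypotheses of the theorem.

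There is a second, independent gap: even granting a correct target, the ``twist'' maps $X_1 \mapsto \lambda X_1$ are automorphisms of $W(\kk)[[X_1]]$, but there is no reason they extend to the finite extension $\bar R$ (or to $S$), so you have not produced infinitely many $\cat$-morphisms $R \to S$. You acknowledge this is ``the one point needing an argument'' but the sketch you give does not close it.

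The paper's proof takes a completely different route that sidesteps both issues. Rather than trying to build one $S$ directly, it observes that the uncountably many $\cat$-morphisms $R_0 \to W(\kk)$ (sending the $X_i$ to arbitrary elements of $pW(\kk)$) each lift, by going-up for the integral extension $R_0 \subseteq R$, to a surjection $R \twoheadrightarrow R/\mf{q}$ with $R/\mf{q}$ a domain integral over $W(\kk)$, hence genuinely in $\mf{W}$. One then invokes Lemma~\ref{LemCountablyManyDomains} (only countably many isomorphism classes of domains in $\mf{W}$, via Krasner's lemma) and the pigeonhole principle to conclude that some single $S \in \mf{W}$ receives uncountably many of these surjections. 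The counting argument is what makes the proof work; your attempt to avoid it is precisely where the argument breaks down.
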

\begin{proof}
Due to the assumption $d \geq 2$, there exist uncountably many prime ideals $\mf{p}$ of $R_0$ with the property $R_0/\mf{p} \cong W(\kk)$ (recall that by definition $R_0 \cong W(\kk)[[X_1, \ldots, X_{d-1}]]$). If $\mf{p}$ is one of them then there exists a prime ideal $\mf{q}$ of $R$ such that $\mf{q} \cap R_0 = \mf{p}$ (\cite[Theorem~5.10]{Atiyah}). The domain $R/\mf{q}$ is an integral extension of $S/\mf{p} \cong W(\kk)$, hence belongs to $\mf{W}$. We obtain thus uncountably many surjections $R \rightarrow R/\mf{q}$ from $R$ to some integral domain in $\mf{W}$. Lemma~\ref{LemCountablyManyDomains} and infinite pigeonhole principle imply that for some integral domain $S \in \mf{W}$ the set $\Hom_{\cat}(R, S)$ is infinite (even uncountable).
\end{proof}

\begin{remark}
It is tempting to ``refine'' the above theorem by changing ``integral domain'' to ``discrete valuation ring'', using the following argument: 

\textit{If a domain $S \in \mf{W}$ has the field of fractions $L$ then $S \subseteq \mathcal{O}_L$. Let us thus compose the considered morphisms $R \rightarrow S$ with inclusions $S \hookrightarrow \mathcal{O}_L$, in order to obtain infinitely many morphisms $R \rightarrow \mathcal{O}_L$.}

However, $\mathcal{O}_L$ is not necessarily a $\cat$-ring. Even though it is complete, local and noetherian, its residue field may be strictly larger than $\kk$; see the example below. 
\end{remark}

\begin{example}  
Suppose $p$ is a prime number satisfying $p \equiv 3 \pmod{4}$, i.e., such that $-1$ is not a quadratic residue in $\GF_p$ and let $R:= \ZZ_p[[X,Y]]/(X^2+Y^2)$. 

Consider the integral domain $S := \ZZ_p[T]/(T^2+p^2) \in \mf{W}$. For every $a \in \ZZ_p$ we have a $\cat$-morphism defined by $X \mapsto aT$, $Y \mapsto ap$, so $\Hom_{\cat}(R,S)$ is infinite.

On the other hand, if $S \in \mf{W}$ is a discrete valuation ring then the only $a,b \in S$ for which $a^2+b^2 = 0$ are $a=b=0$. Hence, $\Hom_{\cat}(R,S)$ is a one-element set. 
\end{example}

\medskip
\subsection{Case $\dim R = 1$} \label{subsDim=1}

If $d= 1$ and $\ptor{R}=0$ then $R$ itself belongs to $\mf{W}$. It can be shown that $\Spec R$ is finite and so the approach of the preceding subsection can not be applied in this case. Yet there may still exist some $S \in \mf{W}$ for which the set $\Hom(R,S)$ is infinite. 

\begin{example} Let $R := W(\kk)[\eps]$. The only $\cat$-morphism $R \rightarrow S$ such that $S$ is a characteristic zero integral domain is the reduction $R \xrightarrow{\eps \mapsto 0} W(\kk)$, but for $S:= R$ there exist infinitely many morphisms $R \rightarrow S$. Indeed, for every $C \in W(\kk)$ we can define a $\cat$-homomorphism \[ W(\kk)[\eps] \, \ni \, x+ \eps y \ \longmapsto \ x + C \, \eps y \, \in \, W(\kk)[\eps]. \] 
Consequently, $R \notin \mf{U}$. 
\end{example}

Note that all infinitely many $\cat$-homomorphisms $R \rightarrow S$ constructed in the above example reduce to the same morphism modulo $I := \eps S$. Moreover, the ideal $I \lhd S$ has the property $I^2=0$. In the rest of this subsection we will construct pairs $(R,S)$ with similar properties. 

In what follows, we will need the notions of derivations and K\"ahler differentials. Their definitions and basic properties can be found for example in \cite[\S 16]{Eisenbud}.

\begin{proposition} \label{PropDeriv}
Let $A$ be a ring and $f : R \rightarrow S$ be a homomorphism of $A$-algebras. Suppose there exists an ideal $I \lhd S$ with property $I^2=0$ and let  $g : R \rightarrow S$ be an additive map such that $f \equiv g \pmod{I}$. Then $g$ is an $A$-algebra homomorphism if and only if the map $(f-g) : R \rightarrow I$ is an $A$-derivation.
\end{proposition}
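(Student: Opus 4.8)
The plan is to unwind the definitions on both sides and reduce everything to the Leibniz rule. Write $D := f - g : R \to S$; by hypothesis $D$ takes values in $I$, and $D$ is automatically additive since both $f$ and $g$ are. Moreover $D$ kills $A$ (or rather, $D$ is $A$-linear on the nose precisely when both $f$ and $g$ are $A$-algebra maps on the image of the structure map $A \to R$, so the $A$-linearity of $D$ and the statement ``$g$ is an $A$-algebra map'' have to be handled together). So the only content is the multiplicativity of $g$ versus the Leibniz identity for $D$.

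The key computation is the following. For $r, r' \in R$, compute $g(rr')$ in terms of $f$ and $D$: since $g = f - D$ and $f$ is multiplicative,
\[
g(rr') = f(rr') - D(rr') = f(r)f(r') - D(rr').
\]
On the other hand,
\[
g(r)g(r') = \bigl(f(r) - D(r)\bigr)\bigl(f(r') - D(r')\bigr) = f(r)f(r') - f(r)D(r') - D(r)f(r') + D(r)D(r').
\]
Now $D(r), D(r') \in I$ and $I^2 = 0$, so the last term $D(r)D(r')$ vanishes. Also, since $D(r') \in I$ and $f \equiv g \pmod I$, we have $f(r)D(r') = g(r)D(r')$, and modulo the same reasoning $f(r) D(r')$ only depends on $g(r) = $ the image of $r$ under either map; in any case $f(r)D(r') = g(r)D(r')$ because $f(r)-g(r) \in I$ annihilates $D(r') \in I$. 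Thus
\[
g(r)g(r') = f(r)f(r') - g(r)D(r') - D(r)g(r').
\]
Comparing the two expressions, $g(rr') = g(r)g(r')$ holds if and only if $D(rr') = g(r)D(r') + D(r)g(r')$, i.e., if and only if $D$ satisfies the Leibniz rule with respect to the $R$-module structure on $I$ induced by $g$ (equivalently by $f$, since they agree mod $I$ and $I^2=0$). Similarly one checks that $g(1_R) = 1_S$ is equivalent to $D(1_R) = 0$, which is part of being a derivation, and the compatibility with the $A$-algebra structure matches up with $D|_{A} = 0$.

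The one genuine subtlety — and the ``hard part,'' insofar as there is one — is bookkeeping the module structure on $I$: an $A$-derivation $R \to I$ requires $I$ to be an $R$-module, and there are a priori two candidate actions (pull back the $S$-action along $f$ or along $g$). The point to emphasize is that these two actions coincide because $f(r) - g(r) \in I$ and $I \cdot I = 0$, so the statement is unambiguous; once this is noted, the equivalence is exactly the line-by-line comparison above, read in both directions. I would present the computation once, display the two expansions for $g(rr')$ and $g(r)g(r')$, cancel the $I^2$ term, and conclude that multiplicativity of $g$ $\iff$ Leibniz rule for $D$; then dispatch the unit and $A$-linearity conditions in a sentence each.
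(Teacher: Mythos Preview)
Your argument is correct. The paper's own ``proof'' consists of the single sentence ``Apply \cite[Proposition~16.11]{Eisenbud},'' so you have in effect written out the content of that reference rather than appealing to it. Your direct computation---expanding $g(rr')$ and $g(r)g(r')$ via $g = f - D$, killing the cross term $D(r)D(r')$ using $I^2=0$, and matching multiplicativity of $g$ against the Leibniz rule for $D$---is exactly the standard proof of that proposition. Your observation that the two candidate $R$-module structures on $I$ (via $f$ or via $g$) coincide because $f-g$ lands in $I$ and $I^2=0$ is the right way to make the statement unambiguous, and is worth keeping. The only place that could be tightened is the paragraph on $A$-linearity and the unit: it suffices to note that $g$ is $A$-linear iff $D$ is (since $f$ already is), and that $D(1)=0$ is forced by the Leibniz rule, so $g(1)=1$ comes for free on the derivation side.
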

\begin{proof}
Apply \cite[Proposition 16.11]{Eisenbud}.
\end{proof}

\begin{notation}
Given a ring $A$ and an $A$-algebra $R$ we will denote by $\Omega_{R/A}$ the $R$-module of relative K\"ahler differentials of $R$.
\end{notation}

\begin{lemma} \label{LemRedFin}
If $R,S \in \mf{W}$ and $S$ is reduced then $\Hom_{\cat}(R,S)$ is finite.
\end{lemma}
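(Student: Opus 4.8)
The goal is to show that if $R, S \in \mathfrak{W}$ and $S$ is reduced, then $\mathrm{Hom}_{\mathcal{C}}(R,S)$ is finite. The plan is to reduce the statement, via the localization-away-from-$p$ functor of Lemma~\ref{LemR'}, to a finiteness statement about $K$-algebra homomorphisms between étale $K$-algebras. By Lemma~\ref{LemR'}.(\ref{LemR'Red}), $S' := S[\frac{1}{p}]$ is a finite product of finite field extensions of $K$, and $R' := R[\frac{1}{p}]$ is a finite $K$-algebra which is an integral (hence module-finite) extension of $K$. The natural map $\mathrm{Hom}_{\mathcal{C}}(R,S) \to \mathrm{Hom}_{K\text{-alg}}(R', S')$ is injective: indeed, since $p$ is not a zero-divisor in $S$, the map $S \to S'$ is injective by Lemma~\ref{LemR'}.(\ref{LemR'Inj}), so a $\mathcal{C}$-morphism $R \to S$ is determined by its composite $R \to S \to S'$, which in turn determines and is determined by the induced $K$-algebra map $R' \to S'$. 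Thus it suffices to bound $\#\mathrm{Hom}_{K\text{-alg}}(R', S')$.

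First I would handle $\mathrm{Hom}_{K\text{-alg}}(R', S')$ by decomposing $S' = L_1 \times \cdots \times L_s$ into its factor fields. A $K$-algebra map $R' \to S'$ is the same as an $s$-tuple of $K$-algebra maps $R' \to L_j$, so it is enough to show each $\mathrm{Hom}_{K\text{-alg}}(R', L_j)$ is finite, where $L_j/K$ is a finite field extension. Now $R'$ is a finite-dimensional $K$-algebra (being a finite $K$-module), so $R' = K[x_1, \ldots, x_m]$ is generated by finitely many elements, each of which is integral over $K$ and hence satisfies a monic polynomial $f_i \in K[t]$. A $K$-algebra homomorphism $R' \to L_j$ is determined by the images of the $x_i$, and each such image must be a root in $L_j$ of $f_i$. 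Since $f_i$ has only finitely many roots in the field $L_j$, there are only finitely many possibilities for the tuple of images, hence only finitely many homomorphisms. This gives the finiteness of $\mathrm{Hom}_{K\text{-alg}}(R', S')$, and therefore of $\mathrm{Hom}_{\mathcal{C}}(R,S)$.

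The main subtlety — and the reason reducedness of $S$ is needed — is precisely the step that forces the target to be a product of \emph{fields} rather than a general Artinian $K$-algebra with nilpotents: without reducedness the fibers of $R' \to S'$ over a fixed "residue-field-level" homomorphism could be infinite, as the motivating examples $W(\kk)[\eps]$ and $\ZZ_p[[X,Y]]/(X^2+Y^2)$ illustrate. A secondary technical point to check carefully is that the correspondence between $\mathcal{C}$-morphisms $R \to S$ and $K$-algebra morphisms $R' \to S'$ is genuinely a bijection onto a subset: one direction uses that $R \to R'$ and $S \to S'$ are the localizations, so any $K$-algebra map $R' \to S'$ restricts to a $W(\kk)$-algebra map $R \to S'$; to see it lands in $S$ and is local, note that an integral element of $S'$ over $W(\kk)$ whose image lies in $S$ after inverting $p$... — actually we only need injectivity of the map $\mathrm{Hom}_{\mathcal{C}}(R,S) \hookrightarrow \mathrm{Hom}_{K\text{-alg}}(R',S')$, which is immediate, so this secondary point does not even arise. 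I expect the argument to be short once the reduction to field targets via Lemma~\ref{LemR'} is in place; the only place requiring care is invoking that $R'$ is module-finite over $K$ (so finitely generated as an algebra) to make the "finitely many roots" argument go through.
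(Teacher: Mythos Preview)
Your proposal is correct and follows essentially the same route as the paper: both use that $S \hookrightarrow S' = S[\tfrac{1}{p}]$ is injective and that $S'$ is a finite product of fields (Lemma~\ref{LemR'}.(\ref{LemR'Red})), then observe that the finitely many algebra generators $x_1,\ldots,x_m$ of $R$ over $W(\kk)$ (equivalently of $R'$ over $K$) are integral, so their images must lie among the finitely many roots of fixed monic polynomials in the product of fields $S'$. The only cosmetic difference is that the paper maps $R$ directly into $S'$ using generators and monic polynomials over $W(\kk)$, whereas you first pass to $R'$ and work over $K$; and the paper does not bother to decompose $S'$ into its field factors, since a polynomial already has only finitely many roots in a finite product of fields.
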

\begin{proof}
By Lemma~\ref{LemR'}.(\ref{LemR'Inj}) it is sufficient to prove that there exist only finitely many $W(\kk)$-algebra homomorphisms $R \rightarrow S[\frac{1}{p}] =: S'$. Let $x_1, \ldots, x_m \in R$ be such that $R = W(\kk)[x_1, \ldots, x_m]$. For every $i \in \{ 1,\ldots, m\}$ there exists a monic polynomial $F_i \in W(\kk)[X]$ of which $x_i$ is a root. If $S$ is reduced then $S'$ is a finite product of fields by Lemma~\ref{LemR'}.(\ref{LemR'Red}). Hence, each of these polynomials has only a finite number of roots in $S'$, so there exist only finitely many values in $S'$ to which $x_i$, $i \in \{ 1,\ldots, m\}$, cn be mapped. This proves the claim.
\end{proof}

\begin{theorem} \label{ThmCaseDim=1}
Consider $R \in \mf{W}$ and define $R' := R[\frac{1}{p}]$, $\Omega := \Omega_{R/W(\kk)}$, $\Omega' := \Omega_{R'/K}$. The following conditions are equivalent. 
\begin{enumerate}[(i)]
\item There exists $S \in \mf{W}$ such that $\Hom_{\cat}(R,S)$ is infinite.
\item $\Omega \neq \ptor{\Omega}$. 
\item $\Omega'$ is not trivial.
\item $R$ is not reduced.
\end{enumerate}
\end{theorem}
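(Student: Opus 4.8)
The plan is to prove the cycle of implications $(iv) \Rightarrow (iii) \Rightarrow (ii) \Rightarrow (i) \Rightarrow (iv)$, with the last one being the most substantial.

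\medskip
\textbf{Approach.} First I would record the standard fact that for a finite separable field extension $L/K$ one has $\Omega_{L/K} = 0$, so by the structure of $R'$ from Lemma~\ref{LemR'} (a finite product of fields $L_i$, each a finite \emph{separable} extension of $K$ since $\Char K = 0$), reducedness of $R$ forces $\Omega' = \Omega_{R'/K} = \bigoplus_i \Omega_{L_i/K} = 0$. This gives $\neg(iv) \Rightarrow \neg(iii)$, i.e. $(iii) \Rightarrow (iv)$. For $(ii) \Leftrightarrow (iii)$ I would use that $\Omega' = \Omega_{R'/K} \cong \Omega_{R/W(\kk)} \otimes_R R' = \Omega \otimes_{W(\kk)} K = \Omega[\frac1p]$ (localization commutes with K\"ahler differentials, plus base change along $W(\kk) \to K$), together with Lemma~\ref{LemR'}.(\ref{LemR'Inj})-type reasoning: the kernel of $\Omega \to \Omega[\frac1p]$ is exactly the $p$-power torsion, so $\Omega[\frac1p] = 0$ iff $\Omega = \ptor{\Omega}$; hence $\neg(iii) \Leftrightarrow \neg(ii)$, so $(ii) \Leftrightarrow (iii)$. (Note $\Omega$ is a finitely generated $R$-module since $R$ is a finitely generated $W(\kk)$-algebra, so ``$\Omega = \ptor\Omega$'' is the same as ``$\Omega$ is killed by some $p^r$''.)

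\medskip
\textbf{The step $(ii) \Rightarrow (i)$.} Suppose $\Omega \neq \ptor\Omega$, equivalently $\Omega' \neq 0$. I want to manufacture a ring $S \in \mf{W}$ with $I \lhd S$, $I^2 = 0$, together with infinitely many $\cat$-homomorphisms $R \to S$ all reducing to a fixed one modulo $I$. The idea, modelled on the $W(\kk)[\eps]$ example: fix the structure map $f_0 : R \to R$ (identity), pick a nonzero $W(\kk)$-module map (derivation candidate) and use Proposition~\ref{PropDeriv}. Concretely I would take $S := R \oplus M$ (trivial square-zero extension) for a suitable finitely generated $R$-module $M$ with $M[\frac1p] \neq 0$ so that $S \in \mf{W}$ (it is a finitely generated $W(\kk)$-module; one must check $p$ is not a zero-divisor, which holds if $M$ is $p$-torsion-free, e.g. $M$ a fractional-ideal-type module or simply $M = R$ when $\Omega_{R/W(\kk)}$ itself already surjects onto something). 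The point is that $\Hom_{W(\kk)\text{-der}}(R, M) = \Hom_R(\Omega, M)$, and taking $M$ so that this Hom is infinite — e.g. $M = \Omega/\ptor\Omega$, which is a nonzero $p$-torsion-free finitely generated $R$-module by hypothesis $(ii)$, hence in particular $\Hom_R(\Omega, M) \twoheadrightarrow \Hom_R(M, M) \ni \id, 2\,\id, 3\,\id, \ldots$ — produces infinitely many distinct derivations, hence by Proposition~\ref{PropDeriv} infinitely many distinct $W(\kk)$-algebra homs $R \to S$. One has to make sure these are \emph{local} (they reduce to the augmentation $S \to R \to \kk$, so they are) and distinct as maps into $S$ (they differ already in the $M$-component). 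This realizes $(i)$.

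\medskip
\textbf{The step $(i) \Rightarrow (iv)$.} This is the converse direction and I expect it to be the main obstacle. Suppose $R$ is reduced; I must show $\Hom_{\cat}(R,S)$ is finite for \emph{every} $S \in \mf{W}$. This is precisely Lemma~\ref{LemRedFin} but with the roles reversed: there $S$ is assumed reduced, here $R$ is. So I cannot quote it directly. The strategy: a $\cat$-homomorphism $R \to S$ extends to $R' = R[\frac1p] \to S' = S[\frac1p]$, and $R'$ is a finite product of fields $\prod_i L_i$ by Lemma~\ref{LemR'}.(\ref{LemR'Red}) (using reducedness of $R$). A $K$-algebra map from a product of fields $\prod L_i$ into any $K$-algebra $S'$ is determined by choosing, for each primitive idempotent, a target idempotent in $S'$ and a $K$-embedding of the corresponding $L_i$; since $S'$ is a finite-dimensional $K$-algebra it has finitely many idempotents, and each $L_i$ being a finite separable (as $\Char K = 0$) extension of $K$ has finitely many $K$-embeddings into any fixed finite extension, hence finitely many $K$-algebra maps into the finite-dimensional $S'$ (bound the number of roots of the minimal polynomial of a primitive element of each $L_i$). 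Therefore $\Hom_{K\text{-alg}}(R', S')$ is finite, and restricting along $R \hookrightarrow R'$ (injective by Lemma~\ref{LemR'}.(\ref{LemR'Inj})) shows $\Hom_{\cat}(R,S)$ is finite. The delicate point to get right is the passage from $\cat$-morphisms $R \to S$ to $K$-algebra morphisms $R' \to S'$ and back: I need that distinct local $W(\kk)$-algebra maps $R \to S$ induce distinct maps $R' \to S'$ (clear from injectivity of $S \hookrightarrow S'$ and $R \hookrightarrow R'$), and conversely every $K$-algebra map $R' \to S'$ that does arise restricts to at most one $\cat$-map — but I only need the finiteness bound, so it suffices that $R \to S \mapsto R' \to S'$ is injective, which it is. Assembling: $(iv) \Rightarrow (i)$ fails, i.e. $\neg(iv) \Rightarrow \neg(i)$... wait — rather, this shows $(iv) \Rightarrow \neg(i)$, contrapositive $(i) \Rightarrow \neg(iv)$ is false; let me restate: what I proved is ``$R$ reduced $\Rightarrow \Hom_{\cat}(R,S)$ finite for all $S$'', which is exactly $(iv) \Rightarrow \neg(i)$, i.e. the contrapositive $(i) \Rightarrow (iv)$ — no: $(iv) \Rightarrow \neg(i)$ contraposed is $(i) \Rightarrow \neg(iv)$. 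The correct reading is that $(iv)$ and $(i)$ are contradictory, so combined with the chain $(iv)\Rightarrow(iii)\Rightarrow(ii)\Rightarrow(i)$ we would get $(iv)\Rightarrow\neg(iv)$, forcing everything to be equivalent to ``$R$ not reduced''. So the clean way to organize is: prove $(iv)\Rightarrow(iii)$, $(iii)\Leftrightarrow(ii)$, $(ii)\Rightarrow(i)$, and $(i)\Rightarrow\neg(iv)$ i.e. $(iv)\Rightarrow\neg(i)$; together these four facts show $(i),(ii),(iii)$ are each equivalent to the negation of $(iv)$, establishing the theorem. The main work is the finiteness argument $(iv)\Rightarrow\neg(i)$ sketched above and the construction $(ii)\Rightarrow(i)$.
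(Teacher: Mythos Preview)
Your mathematical ingredients are essentially right, and for $(ii)\Leftrightarrow(iii)$ and $(ii)\Rightarrow(i)$ you do exactly what the paper does (including the choice $S=R\oplus\Omega/\ptor\Omega$). Your route through the remaining implications, however, differs from the paper's and leaves a genuine gap.

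The paper closes the cycle by proving $(i)\Rightarrow(ii)$ directly: given $S\in\mf W$ with $\Hom_{\cat}(R,S)$ infinite, it filters $S$ by powers of its nilradical $N$, uses Lemma~\ref{LemRedFin} (finiteness when the \emph{target} is reduced) at the bottom of the tower to locate a level $r$ where infinitely many maps agree modulo $N^r$ but not modulo $N^{r+1}$, and then reads off infinitely many derivations $R\to N^r/N^{r+1}$, forcing $\Omega/\ptor\Omega\neq 0$. Separately it proves $(iii)\Leftrightarrow(iv)$ as an honest two-sided equivalence via the criterion ``$\Omega'=0$ iff $R'$ is \'etale over $K$''. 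You instead aim for $(i)\Rightarrow(iv)$ by showing that when the \emph{source} $R$ is reduced, $\Hom_{\cat}(R,S)$ is finite for every $S\in\mf W$. This is a legitimate alternative and the sketch is sound; the point you gloss over --- that a separable polynomial has only finitely many roots in any Artinian $K$-algebra --- follows from Hensel-type unique lifting from each residue field. The trade-off: you avoid the nilradical filtration of $S$ entirely, but you now need an arrow \emph{out} of $(iv)$ to close the loop.

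That is the gap. You only argue $\neg(iv)\Rightarrow\neg(iii)$ (``$R$ reduced forces $\Omega'=0$''), never the converse. With just $(ii)\Leftrightarrow(iii)$, $(ii)\Rightarrow(i)$, $(i)\Rightarrow(iv)$, and $(iii)\Rightarrow(iv)$ in hand, nothing gets you back from $(iv)$ to $(i)$, $(ii)$ or $(iii)$. Your final paragraph senses the problem but the bookkeeping derails: you end up asserting that the four facts make $(i),(ii),(iii)$ equivalent to the \emph{negation} of $(iv)$, and along the way you derive ``$(iv)\Rightarrow\neg(iv)$'', which would force every $R\in\mf W$ to be reduced. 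The fix is easy --- invoke the other direction of the differential criterion (if $\Omega_{R'/K}=0$ then the finite $K$-algebra $R'$ is \'etale, hence reduced, hence $R$ is reduced by Lemma~\ref{LemR'}.(\ref{LemR'Inj})) --- but as written the cycle does not close.
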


\begin{proof}

$(i) \Rightarrow (ii)$: 
Let $N := \nil S$ and observe that by noetherianity there exists $m \in \mathbb{N}$ such that $N^m=0$. We have a finite chain of morphisms 
\[S = S/N^m \twoheadrightarrow S/N^{m-1} \twoheadrightarrow \ldots \twoheadrightarrow S/N^2 \twoheadrightarrow S/N. \]
By assumption, $\# \Hom_{\cat}(R, S/N^m)=\infty$ and $\# \Hom_{\cat}(R, S/N) <\infty$ by Lemma~\ref{LemRedFin}. Hence, there exists $r \in \{1,2, \ldots, m-1\}$ such that 
\[ \# \Hom_{\cat}(R,S/N^{r+1}) = \infty  \quad \hbox{and} \quad \#\Hom_{\cat}(R,S/N^{r}) < \infty .\] Let $\tilde{S}:=S/N^{r+1}$, $I := N^r / N^{r+1}$. Then $\tilde{S} \in \mf{W}$ and $I$ is such a non-zero ideal of $\tilde{S}$ that $I^2=0$. By infinite pigeonhole principle, there exists an infinite family of $\cat$-morphisms $ R \rightarrow \tilde{S}$ with the same reduction modulo $I$. By Proposition~\ref{PropDeriv}, it corresponds to an infinite family of derivations $R \rightarrow I$, so $\Hom_R(\Omega,I)$ is infinite by the definition of $\Omega$. Since $p$ is not a zero-divisor in $\tilde{S}$, we have $\Hom_R(\Omega,I) = \Hom_R(\Omega/\ptor{\Omega},I)$. In particular, $\Omega/\ptor{\Omega}$ must be non-trivial.
 
 \smallskip
 
$(ii) \Rightarrow (i)$: Define $M := \Omega/ \ptor{\Omega}$ and let us adopt the convention of writing $[\omega]$ for the corresponding class of $\omega \in \Omega$ in $M$. Note that $\Omega$ is a finitely generated $R$-module (because $R$ is a finitely generated $W(\kk)$-algebra) and hence, so are $M$ and $S := R \oplus M$. 

We introduce the ring structure on $S$ using the scalar multiplication and setting $xy=0$ for every $x,y \in M$. The obtained ring is clearly local, complete and noetherian (here it is important that $M$ is a finitely generated $R$-module) and has the same residue field as $R$. Moreover, $\ptor{S}$ is trivial and since $R \subseteq S$  is an integral extension, we also have $\dim S = \dim R = 1$. Therefore, $S \in \mf{W}$. Furthermore, for every $C \in W(\kk)$ the map $R \ni x \mapsto x + C \cdot [dx] \in S$ is a well-defined $\cat$-morphism. Since $M$ is a (by assumption non-trivial) free $W(\kk)$-module, all these morphisms are pairwise distinct. We conclude that $S$ satisfies all the required properties. 
  
\smallskip

$(ii) \Leftrightarrow (iii)$: This part follows from $\Omega' =  \Omega_{R'/K} \cong \Omega_{R/W(\kk)} \otimes_{\,W(\kk)} K \cong \Omega[\frac{1}{p}]$ (note that formation of dirrentials commutes with base change: \cite[Proposition~16.4]{Eisenbud}) and an easy observation that $\Omega[\frac{1}{p}]=0$ if and only if $\Omega = \ptor{\Omega}$.

\smallskip

$(iii) \Leftrightarrow (iv)$: $R'$ is a finitely generated $K$-algebra, so $\Omega'=0$ if and only if $R'$ is a finite direct product of fields, each finite and separable over $K$ (\cite[Corollary~16.16]{Eisenbud}). Note that $\Char K=0$, so every field extension of $K$ is separable and combine this result with Lemma~\ref{LemR'}.(\ref{LemR'Red}).

\end{proof}

\section{The main result, corollaries and comments}

\subsection{Main result}

The following result is an immediate consequence of Corollary~\ref{CorWinF}, Theorem~\ref{ThmCaseDim>1} and Theorem~\ref{ThmCaseDim=1}:

\begin{theorem} \label{ThmCritUDR}
Let $R \in \Ob(\cat)$ be of characteristic zero and in $\mf{U}$. Then $R/\ptor{R}$ is reduced and of Krull dimension $1$.
\end{theorem}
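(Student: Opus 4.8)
The plan is to reduce Theorem~\ref{ThmCritUDR} to the combination of Corollary~\ref{CorWinF}, Theorem~\ref{ThmCaseDim>1} and Theorem~\ref{ThmCaseDim=1} by a contrapositive argument. Suppose $R \in \Ob(\cat)$ has characteristic zero and $\tilde R := R/\ptor{R}$ is either non-reduced or of Krull dimension $\neq 1$; I want to show $R \notin \mf{U}$. The first step is to pass from $R$ to $\tilde R$: since every $S \in \mf{W}$ has $p$ not a zero-divisor (Lemma~\ref{LemDefiningW}.(ii)), the observation preceding section~\ref{subsDim>1} shows that every $\cat$-morphism $R \to S$ factors uniquely through $\tilde R$, so $\Hom_{\cat}(R,S)$ and $\Hom_{\cat}(\tilde R, S)$ are in natural bijection. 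Thus it suffices to produce a single $S \in \mf{W}$ with $\Hom_{\cat}(\tilde R, S)$ infinite; if we have that, then $\Def_{\id_S}$-type counting — more precisely, the fact that $\tilde R$ maps onto countably many deformation sets — fails: pick any $G$ and $\rhobar$ realizing $S$ as universal deformation ring (this is where $S \in \mf{U}$ would be used, but we only need the other direction), and observe that if $\tilde R$ (equivalently $R$) were a universal deformation ring then by Corollary~\ref{CorWinF} it would lie in $\mf{F}$, forcing $\Def_{\rhobar}(\tilde R)$ finite for all $G,\rhobar$, contradicting... actually the cleaner route is the direct one below.

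Let me restructure. The honest proof is: assume $R \in \mf{U}$, say $R$ is the universal deformation ring of $\rhobar: G \to \GL_n(\kk)$. Set $\tilde R := R/\ptor{R}$; note $\Char \tilde R = 0$ by Observation~\ref{LemRp}, and $p$ is not a zero-divisor in $\tilde R$. Let $d := \dim \tilde R$. Since $p$ is not a zero-divisor, $d \geq 1$. I claim $d = 1$ and $\tilde R$ is reduced. Suppose not. If $d \geq 2$, then by Theorem~\ref{ThmCaseDim>1} (applied to $\tilde R$, which satisfies its hypotheses) there is a domain $S \in \mf{W}$ with $\Hom_{\cat}(\tilde R, S)$ infinite. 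If $d = 1$ but $\tilde R$ is not reduced, then $\tilde R \in \mf{W}$ (by the remark opening section~\ref{subsDim=1}) and Theorem~\ref{ThmCaseDim=1}, specifically the implication $(iv) \Rightarrow (i)$, gives $S \in \mf{W}$ with $\Hom_{\cat}(\tilde R, S)$ infinite. In either case, composing with the surjection $R \twoheadrightarrow \tilde R$ yields an infinite subset of $\Hom_{\cat}(R, S)$.

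Now I derive the contradiction from $\# \Hom_{\cat}(R,S) = \infty$ with $S \in \mf{W}$. By universality, $\Hom_{\cat}(R,S) \cong \Def_{\rhobar}(S)$ as sets, so $\Def_{\rhobar}(S)$ is infinite. But $S \in \mf{W} \subseteq \mf{F}$ by Corollary~\ref{CorWinF}, so $\Def_{\rhobar}(S)$ is finite — contradiction. Hence $d = 1$ and $\tilde R$ is reduced, which is exactly the assertion of the theorem. The argument is essentially a bookkeeping assembly of the three cited results, so I do not anticipate a genuine obstacle; the only point requiring a little care is checking that $\tilde R$ (rather than $R$) is the object to which Theorems~\ref{ThmCaseDim>1} and~\ref{ThmCaseDim=1} apply, and that one may freely pull morphisms back along $R \twoheadrightarrow \tilde R$ — both of which are immediate from Observation~\ref{LemRp} and functoriality. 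One should also record at the outset that the case $d = 0$ is vacuous here: $p$ not being a zero-divisor in $\tilde R$ together with $\Char \tilde R = 0$ forces $\dim \tilde R \geq 1$, so "$d = 1$ and reduced" is the only surviving possibility among the dichotomies covered by the two theorems.
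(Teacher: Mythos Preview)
Your restructured argument is correct and follows exactly the paper's route: identify $\Hom_{\cat}(R,S)$ with $\Def_{\rhobar}(S)$ via universality, use Corollary~\ref{CorWinF} to force this set finite for every $S \in \mf{W}$, and then apply Theorems~\ref{ThmCaseDim>1} and~\ref{ThmCaseDim=1} contrapositively to $\tilde R = R/\ptor{R}$. The paper's proof is terser and leaves the passage from $R$ to $\tilde R$ implicit (that reduction was set up once at the start of Section~6), whereas you spell it out; your observation that $d \geq 1$ is automatic is a small detail the paper does not bother to record.
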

\begin{proof}
If $R$ is a universal deformation ring of a representation $\rhobar$ of a finite group $G$ and $S \in \mf{W}$, then $\Def_{\rhobar}(S)$ is finite by Corollary~\ref{CorWinF}. Hence, so is $\Hom_{\cat}(R,S)$. Theorem~\ref{ThmCaseDim>1} implies thus that $\dim R/\ptor{R} = 1$ and Theorem~\ref{ThmCaseDim=1} implies that $R/\ptor{R}$ is reduced.
\end{proof}

Alternatively, we can phrase this theorem as follows:

\begin{theorem}
 If $R \in \mf{U}$ then $R \otimes_{W(\kk)} K$ is a finite \'etale $K$-algebra.
\end{theorem}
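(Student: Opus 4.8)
The plan is to deduce this reformulation directly from Theorem~\ref{ThmCritUDR}, once the positive-characteristic case has been cleared away separately.

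First I would observe that, since $R$ is a $W(\kk)$-algebra with residue field $\kk$ and $W(\kk)$ is a discrete valuation ring with uniformizer $p$, the ring $R$ has characteristic either $0$ or $p^m$ for some $m\geq 1$. If $\Char R = p^m$, then $p$ is nilpotent in $R$, so $R\otimes_{W(\kk)}K = R[\frac{1}{p}]$ is the zero ring, which is a finite \'etale $K$-algebra (the empty product of field extensions). Hence it suffices to treat the case $\Char R = 0$.

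Next I would reduce to the ring $\tilde R := R/\ptor{R}$. For any $W(\kk)$-algebra one has $R\otimes_{W(\kk)}K = R[\frac{1}{p}]$, since $K = W(\kk)[\frac{1}{p}]$ and base change along $W(\kk)\to W(\kk)[\frac{1}{p}]$ is localization at the powers of $p$; moreover every element of $\ptor{R}$ dies upon inverting $p$, so $R\otimes_{W(\kk)}K \cong \tilde R[\frac{1}{p}]$. The ring $\tilde R$ lies in $\cat$ (it is a quotient of the complete noetherian local ring $R$, and $\ptor{R}\subseteq\mf{m}_R$ because $\Char R = 0$ forces $\ptor{R}$ to be a proper ideal), and by Observation~\ref{LemRp} it has characteristic $0$ with $p$ not a zero-divisor. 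Since $R\in\mf{U}$ and $\Char R = 0$, Theorem~\ref{ThmCritUDR} applies to $R$ and shows that $\tilde R = R/\ptor{R}$ is reduced of Krull dimension $1$; by Lemma~\ref{LemDefiningW} (condition $(ii)$) this says precisely that $\tilde R\in\mf{W}$.

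Finally I would apply the structure theory of $\mf{W}$-rings. By Lemma~\ref{LemR'}.(\ref{LemR'Inj}) the localization $\tilde R[\frac{1}{p}]$ is again reduced, so by Lemma~\ref{LemR'}.(\ref{LemR'Red}) it is a finite product of finite field extensions of $K$; since $\Char K = 0$ each of these factors is separable, so $R\otimes_{W(\kk)}K \cong \tilde R[\frac{1}{p}]$ is a finite \'etale $K$-algebra, as claimed. The only step carrying genuine mathematical content is the appeal to Theorem~\ref{ThmCritUDR}; everything else is bookkeeping. The point to watch is that it is the hypotheses of that theorem for $R$ \emph{itself} ($R\in\cat$, $\Char R=0$, $R\in\mf{U}$) that are used — so that its conclusion about $R/\ptor{R}$ is available — and one does not need to know that $\tilde R$ lies in $\mf{U}$.
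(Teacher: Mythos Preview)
Your argument is correct and follows exactly the route the paper intends: the statement is presented there as an alternative phrasing of Theorem~\ref{ThmCritUDR}, and you have simply spelled out the translation (handling the positive-characteristic case, passing to $R/\ptor{R}$, and invoking Lemma~\ref{LemR'}) that the paper leaves implicit.
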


Note that the infinite rings belonging to $\mf{U}$ that were mentioned in Section~\ref{SecInitial} indeed satisfy the conditions of the theorem. On the other hand, we can use it to produce explicit examples of $\cat$-rings not belonging to $\mf{U}$.

\begin{example}
The rings $W(\kk)[[X]]$ and $W(\kk)[\eps]$ are not in $\mf{U}$.
\end{example}

\begin{remark}
Let us return to the construction described in the proof of Proposition~\ref{PropCatUncount}. Theorem~\ref{ThmCritUDR} implies that actually none of the constructed uncountably many rings belongs to $\mf{U}$. However, Proposition~\ref{PropCatUncount} still provides some extra information. Namely, it implies that there are uncountably many $\cat$-rings that can not be obtained even as \emph{versal} deformation rings (see for example \cite[\S 3.1]{Dorobisz}) of finite group representations. 
\end{remark}

\begin{remark}
Note that contrary to the case $R = W(\kk)[X]/(X^2)$, every ring of the form $W(\kk)[X]/(X^2,p^rX)$, $r \in \NN_{\geq 1}$, belongs to $\mf{U}$, as was shown by Bleher, Chinburg and de Smit in \cite{BleherChinburgDeSmit}. Theorem~\ref{ThmCritUDR} explains where does the main difference lie between these two similar cases.
\end{remark} 
	
	Observe also how easy it is to arrive at an unsolved case: the author is not aware of any result concerning the problem whether, given $r \in \NN_{\geq 1}$, the ring $W(\kk)[X]/(X^2-p^rX)$ belongs to $\mf{U}$ or not.

\subsection{Quotients of universal deformation rings}

\begin{definition}
Let us denote by $\mf{Q}$ the subclass of all $\cat$-rings of the form $R/I$, where $R \in \mf{U}$ and $I$ is its proper ideal.
\end{definition}

\begin{remark} \label{RemStrengtheningUtoQ}
 Theorem~\ref{ThmCritUDR} holds true also if we replace "in $\mf{U}$", by "in $\mf{Q}"$. Indeed, if $S \in \mf{W}$ and $R \in \mf{Q}$ is a quotient of $R' \in \mf{U}$, then finiteness of the set $\Hom_{\cat}(R',S)$ implies finiteness of the set $\Hom_{\cat}(R,S)$. The proof of Theorem~\ref{ThmCritUDR} is thus valid also in case $R \in \mf{Q}$. 
\end{remark}

One could hope that this strengthening would allow to obtain new results about the class $\mf{U}$ itself. That is, a priori, one could expect that Theorem~\ref{ThmCritUDR} does not exclude $R$ from being in $\mf{U}$, but some of its quotient is excluded from being in $\mf{Q}$ by Remark~\ref{RemStrengtheningUtoQ}. However, this is not the case, due to the following easy observation. 

\begin{observation}
Let $R \in \Ob(\cat)$ and $I \lhd R$ be such that $\Char R  = \Char (R/I) = 0$. If $R/\ptor{R}$ is both reduced and of dimension one, then so is $(R/I) /\ptor{R/I}$.
\end{observation}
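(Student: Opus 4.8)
The plan is to realise $T := (R/I)/\ptor{R/I}$ as a quotient of $\bar{R} := R/\ptor{R}$ and then to transfer the two properties across, being careful that reducedness does \emph{not} pass to quotients for free. Write $S := R/I$. Since $I$ is proper and $\Char S = 0$, the ideal $\ptor{S}$ is proper, so $T$ is a $\cat$-ring, and by Observation~\ref{LemRp} applied to $S$ we get $\Char T = 0$ and that $p$ is not a zero-divisor in $T$. The first step is to observe that the composite surjection $R \twoheadrightarrow S \twoheadrightarrow T$ sends any element annihilated by a power of $p$ to an element of $T$ annihilated by that same power of $p$, hence to $0$; therefore it factors through a surjection $\bar{R} \twoheadrightarrow T$. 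By hypothesis $\bar{R}$ is reduced and of dimension one, and by Observation~\ref{LemRp} applied to $R$ the element $p$ is not a zero-divisor in $\bar{R}$, so $\bar{R} \in \mf{W}$ by Lemma~\ref{LemDefiningW}.

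Next I would determine $\dim T$. As a quotient of $\bar{R}$ it satisfies $\dim T \leq \dim \bar{R} = 1$. It cannot be zero-dimensional: a zero-dimensional $\cat$-ring is Artinian local, so its maximal ideal --- which contains $p$ --- is nilpotent, forcing $p$ to be nilpotent in $T$ and contradicting $\Char T = 0$. Hence $\dim T = 1$, and combined with the fact that $p$ is a non-zero-divisor this gives $T \in \mf{W}$ by Lemma~\ref{LemDefiningW}.

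It remains to prove that $T$ is reduced. Here a direct appeal to reducedness of $\bar{R}$ fails, since quotients of reduced rings need not be reduced; the point is to invert $p$ first. Since $T \in \mf{W}$, Lemma~\ref{LemR'}.(\ref{LemR'Inj}) reduces us to showing that $T[\frac{1}{p}]$ is reduced. Because $\bar{R} \in \mf{W}$ is reduced, Lemma~\ref{LemR'}.(\ref{LemR'Red}) shows $\bar{R}[\frac{1}{p}]$ is a finite product of finite field extensions of $K$; and since localisation is exact, $T[\frac{1}{p}]$ is a quotient of $\bar{R}[\frac{1}{p}]$. But the ideals of a finite product of fields correspond to subsets of the set of factors, so every quotient of such a ring is again a finite product of (a subset of) those fields, hence reduced. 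Thus $T[\frac{1}{p}]$ is reduced, so $T$ is reduced, and we conclude that $(R/I)/\ptor{R/I}$ is both reduced and of Krull dimension one.

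The only genuinely delicate point is the last paragraph: reducedness is not a quotient-stable property, so the argument must route through $\bar{R}[\frac{1}{p}]$, where $\bar{R}$ decomposes as a product of fields and quotients become completely transparent. Everything else is bookkeeping with Lemmas~\ref{LemRp}, \ref{LemDefiningW} and~\ref{LemR'}.
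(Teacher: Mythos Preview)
Your proof is correct. The paper gives no proof of this observation --- it is stated as an ``easy observation'' and left to the reader --- so there is nothing to compare against; your argument supplies exactly the details one would expect, and the route through $\bar{R}[\tfrac{1}{p}]$ (a finite product of fields, whose quotients are transparently reduced) is the natural way to handle the reducedness step.
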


Thus, the following problem remains open:

\begin{question}
Obviously $\mf{U} \subseteq \mf{Q}$. But is $\mf{Q}$ strictly larger than $\mf{U}$?
\end{question}

\subsection{Towards extending the main result} 

\begin{question}
Let $S \in \Ob(\cat)$ be a one-dimensional, reduced ring in which $p$ is not a zero-divisor (in particular: of characteristic zero). 
\begin{enumerate}[$(i)$]
\item Does $S \in \mf{Q}$ hold? 
\item If $S \in \mf{Q}$, which rings $R$ such that $R / \ptor{R} \cong S$ are in $\mf{U}$?
\end{enumerate}
\end{question}
\noindent It would be interesting to answer these questions at least in some special cases, for example: for $S = W(\kk)$ (only the second question), for integral domains in general (more restrictively: for discrete valuation rings), for rings with one-dimensional tangent space. Thus, we pose the following test problems: 

\begin{question}
Which of the following rings are in $\mf{U}$ (are in $\mf{Q}$)?
\begin{itemize}
\item $W(\kk)[\sqrt[r]{p}\,]$
\item $W(\kk)[X]/(X^2-p^rX\,)$ 
\item $W(\kk)[[X]]/(p^rX)$
\end{itemize}
Here $r$ is an integer, $r>1$ in the first case and $r\geq 1$ in the other cases.
\end{question}

The only results of which the author is aware is that $\ZZ_5[\sqrt{5}]$ and $\ZZ_p[[X]]/(pX)$ for $p=3$ ($p \geq 3$) is in $\mf{U}$ (is in $\mf{Q}$). 

\subsection{Other remarks}

It is worth noting that Lemma~\ref{LemMaranda}, on which we based our arguments leading to the main result, can be applied also in a slightly different way. Not only in order to find some rings that are not in $\mf{U}$, but also in order to give a lower bound on the size of a group whose representation can realize $R$ as a universal deformation ring. More precisely:

\begin{lemma} \label{LemOtherUseOfMaranda}
Let $R \in \Ob(\cat)$ be given and suppose there exist $S \in \Ob(\cat)$, $r \in \NN$ and $f_1, f_2 \in \Hom_{\cat}(R,S)$ such that $\ptor{S}=0$, $f_1 \neq f_2$, and $f_1 \equiv f_2 \pmod{p^r \mf{m}_S}$. If $R$ is a universal deformation ring of a representation $\rhobar$ of a finite group $G$, then $p^{r+1} \mid \# G $. 
\end{lemma}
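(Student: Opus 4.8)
The plan is to derive a contradiction from $f_1 \neq f_2$ under the assumption that $p^{r+1} \nmid \#G$, combining the universal property of $R$ with Lemma~\ref{LemMaranda}. Write $\#G = p^t s$ with $p \nmid s$ as in Remark~\ref{RemCardinalityG}, and suppose for contradiction that $t \leq r$.

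First I would unwind the hypotheses representation-theoretically. Since $R$ represents $\Def_{\rhobar}$, the morphisms $f_1, f_2 \in \Hom_{\cat}(R,S)$ correspond to classes $[\rho_1], [\rho_2] \in \Def_{\rhobar}(S)$ of lifts $\rho_1, \rho_2 : G \rightarrow \GL_n(S)$, and $f_1 \neq f_2$ means precisely that $\rho_1$ and $\rho_2$ are \emph{not} strictly equivalent over $S$. Next, set $J := \#G \cdot \mf{m}_S$. Because $\ptor{S}=0$, the element $p$ — hence $p^t$, hence $\#G = p^t s$ (as $s$ is a unit in $S$) — is not a zero-divisor in $S$, so Lemma~\ref{LemMaranda} is applicable to $S$ and $G$. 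Observe that $S/J \in \Ob(\cat)$, being a noetherian complete local quotient of $S$ with residue field $\kk$, and that by Remark~\ref{RemCardinalityG} we have $J = p^t\mf{m}_S \supseteq p^r\mf{m}_S$ (here is where $t \leq r$ enters). Hence $f_1 \equiv f_2 \pmod{J}$, i.e.\ the two morphisms $\pi_J \circ f_1, \pi_J \circ f_2 : R \rightarrow S/J$ coincide.

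Now I would invoke the naturality of the universal property: for the reduction $\pi_J : S \rightarrow S/J$, the map $\Hom_{\cat}(R,S) \rightarrow \Hom_{\cat}(R, S/J)$ given by post-composition with $\pi_J$ corresponds to $\Def_{\rhobar}(\pi_J)$, which sends $[\rho_i]$ to the push-forward class $[\pi_J \rho_i] \in \Def_{\rhobar}(S/J)$. Since $\pi_J \circ f_1 = \pi_J \circ f_2$, it follows that $\pi_J \rho_1$ and $\pi_J \rho_2$ are strictly equivalent over $S/J$. Applying the ``if'' direction of Lemma~\ref{LemMaranda} then upgrades this to: $\rho_1$ and $\rho_2$ are strictly equivalent over $S$, i.e.\ $[\rho_1] = [\rho_2]$, i.e.\ $f_1 = f_2$ — contradicting the hypothesis. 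Therefore $t \geq r+1$, which is exactly $p^{r+1} \mid \#G$.

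I do not expect a serious obstacle: the argument is essentially bookkeeping with the universal property and Lemma~\ref{LemMaranda}. The two points requiring mild care are (a) checking that $\#G$, and not merely $p$, is a non-zero-divisor in $S$ so that Lemma~\ref{LemMaranda} applies — this is precisely where $\ptor{S}=0$ is used — and (b) the containment $p^r\mf{m}_S \subseteq \#G\cdot\mf{m}_S$, which is what forces the standing assumption $t \leq r$ and which the conclusion $p^{r+1}\mid\#G$ is, in effect, measuring.
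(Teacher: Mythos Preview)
Your proposal is correct and follows essentially the same approach as the paper: both arguments translate $f_1, f_2$ into deformations via the universal property, apply Lemma~\ref{LemMaranda} with $J = |G|\cdot\mf{m}_S = p^t\mf{m}_S$ (using $\ptor{S}=0$ to verify its hypothesis), and compare $p^t\mf{m}_S$ with $p^r\mf{m}_S$. The paper phrases this directly rather than by contradiction, but the logic is identical and your version simply spells out more of the details.
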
 
\begin{proof}
Let $p^l$ be the largest power of $p$ dividing $\# G$. By definition of a universal deformation ring, morphisms $f_1$ and $f_2$ induce two different deformations of $\rhobar$ to $S$, so $f_1$ and $f_2$ are different modulo $p^l \mf{m}_S$ by Lemma~\ref{LemMaranda}. Using the assumption we conclude that $l > r$ and the claim follows.
\end{proof}

\begin{example}
Let $r \geq 1$ be an integer and suppose $R:= W(\kk)[X]/(X^2-p^rX)$  is a universal deformation ring of a representation of a finite group $G$. Since $\ptor{R}=0$ and we have $f_1 : R \xrightarrow{X \mapsto 0} R$, $f_2 : R \xrightarrow{X \mapsto p^r} R$ with the same reduction modulo $p^{r-1}\mf{m}_R$, Lemma~\ref{LemOtherUseOfMaranda} implies that $p^r \mid \# G$. 
\end{example}

\end{document}